\RequirePackage{fix-cm}
\documentclass[smallextended]{svjour3}       
\smartqed 


\usepackage{amsmath}
\usepackage{amssymb}
\usepackage[a4paper, total={6in, 8in}]{geometry}
\usepackage{algorithm}
\usepackage[dvipsnames]{xcolor}
\usepackage{caption}
\usepackage{subcaption}
\usepackage{algorithmicx}
\usepackage{url}
\usepackage{graphicx}
\usepackage{algpseudocode}
\newcommand{\F}{\mathcal{F}}

\newcommand{\DSB}{\tx{DSB}}

\newcommand{\Sc}[2]{\langle #1, #2 \rangle}

\newcommand{\m}{m}

\newcommand{\tx}{\textnormal}

\newcommand{\n}[1]{\|#1\|}
\newcommand{\argmin}{\tx{argmin}}
\newcommand{\argmax}{\tx{argmax}}
\newcommand{\dist}{\tx{\dist}}
\newcommand{\s}[2]{\langle #1, #2 \rangle}
\newcommand{\OM}{\mathcal{C}}

\newcommand{\conv}{\tx{conv}}
\newcommand{\sm}{\setminus}
\newcommand{\SB}{\tx{SB}}

\newcommand{\rr}{\mathbb{R}}

\newcommand{\bA}{\bar{A}}

\newcommand{\supp}{\text{supp}}

\newcommand{\cM}{\mathcal{M}}
\newcommand{\ck}{\sqrt{2\mu}}
\newcommand{\mE}{\mathbb{E}}

\newcommand{\cA}{\mathcal{A}}

\newcommand{\BB}{\bar{B}}

\newcommand{\SSC}{\textnormal{SSC}}
\newtheorem{assumption}{Assumption}
\def\Ab{{\mathsf A}}

\def\Ib{{\mathsf I}}

\def\Qb{{\mathsf Q}}

\def\b{\mathbf b}

\def\d{\mathbf d}
\def\e{\mathbf e}

\def\g{\mathbf g}

\def\oo{\mathbf o}

\def\q{{\mathbf q}}

\def\ss{\mathbf s}

\def\w{\mathbf w}
\def\x{\mathbf x}
\def\y{\mathbf y}
\def\z{\mathbf z}

\newcommand{\francesco}[1]{\textcolor{black}{#1}} 
\newcommand{\damiano}[1]{\textcolor{black}{#1}} 

\newcommand{\Prob}{\mathbb{P}}
\newcommand{\bx}{\bar{\x}}
\raggedbottom
\title{Projection free methods on product domains}
\author{ Immanuel Bomze
	\and
	Francesco~Rinaldi
	\and 
	Damiano~Zeffiro}

\institute{
	Immanuel Bomze \at
	Vienna Center of OR and Research Network Data Science, Universit\"{a}t Wien \\
	\email{immanuel.bomze@univie.ac.at}
	\and
	Francesco Rinaldi \at
	Department of Mathematics, University of Padua\\
	\email{rinaldi@math.unipd.it}     
	\and
	Damiano Zeffiro \at
	Department of Mathematics, University of Padua\\
	\email{damiano.zeffiro@studenti.unipd.it}  
}

\begin{document}
	\maketitle

	\begin{abstract}
		Projection-free block-coordinate methods avoid high computational cost per iteration, and at the same time exploit the particular problem structure of product domains. Frank-Wolfe-like approaches rank among the most popular ones of this type. However, as observed in the literature, there was a gap between the classical Frank-Wolfe theory and the block-coordinate case, \damiano{with no guarantees of linear convergence rates even for strongly convex objectives in the latter}. Moreover, most of previous research concentrated on convex objectives. This study now deals also with the non-convex case and reduces above-mentioned theory gap, in combining a new, fully developed convergence theory with novel active set identification results which ensure that inherent sparsity of solutions can be exploited in an efficient way. Preliminary numerical experiments seem to justify our approach and also show promising results for obtaining global solutions in the non-convex case.   
	\end{abstract}
\keywords{projection free optimization \and first order optimization \and block coordinate descent}
	
\subclass{MSC 90C06 \and MSC 90C26 \and MSC 90C30}	
	\maketitle
	
	\section{Introduction}\label{sec1}

 We consider the problem
	\begin{equation} \label{problem}
		\min_{\x \in \OM} f(\x) \, ,
	\end{equation}
	with objective $f$ having $L$-Lipschitz regular gradient, and feasible set $\OM\subseteq \rr^n$ closed and convex. Furthermore, we assume that $\OM$ is block separable, that is 
	\begin{equation} \label{d:Omega}
		\OM = \OM_{(1)} \times ... \times \OM_{(\m)}
	\end{equation}
	  with $\OM_{(i)} \subset \rr^{n_i}$ closed and convex for $ i \in [1\! : \! m]$, and of course $\sum_{i=1}^m n_i=n$.
 
	Notice that problem \eqref{problem} falls in the class of composite optimization problems
	\begin{equation}
		\min_{\x \in \OM} [f(\x) + g(\x)]
	\end{equation}
	with $f$ smooth and $g(\x) =\sum_{i=1}^m \chi_{\OM_{(i)}}(\x^{(i)})$ convex and block separable (see, e.g.,  \cite{richtarik2014iteration} for an overview of methods for this class of problems); here $\chi_D : \rr^d \to [0,+\infty ]$ denotes the indicator function of a convex set $D\subseteq \rr^d$, and for a block vector 
 $\x \in \rr^{n} = \rr^{n_1}\times ... \times \rr^{n_m}$ we denote by $\x^{(i)} \in \rr^{n_i}$ the component corresponding to the $i$-th block, so that $\x = (\x^{(1)}, ..., \x^{(m)})$.  
 
  Problems of this type arise in a wide number of real-world applications like, e.g., 
  traffic assignment~\cite{leblanc1975efficient}, structural
SVMs~\cite{lacoste2012block}, trace-norm based tensor completion
\cite{liu2012tensor}, reduced rank nonparametric regression
\cite{foygel2012nonparametric}, semi-relaxed optimal transport \cite{fukunaga2021fast}, structured submodular minimization \cite{jegelka2013reflection},   group fused lasso \cite{alaiz2013group}, and dictionary learning \cite{boumal2020introduction}. 

Block-coordinate gradient descent (BCGD) strategies (see, e.g., \cite{beck2017first}) represent a standard approach to solve problem \eqref{problem} in the convex case. When dealing with non-convex objectives, those methods can anyway still be used as an efficient tool to perform local searches  in probabilistic global optimization frameworks  (see, e.g., \cite{locatelli2013global} for further details).
The way BCGD approaches work is very easy to understand: those methods build up, at each iteration, a suitable model 
of the original function for a block of variables and then perform a projection on the feasible set related to that block. 

\francesco{Projection-based strategies (see, e.g., \cite{birgin2000nonmonotone,calamai1987projected,di2018two,di2023stationarity} for further details), albeit being widely used in practice also in a block-coordinate fashion (see, e.g., \cite{nesterov2012efficiency}), might however be costly even when the projection is performed over some structured  sets like, e.g., the flow polytope, the nuclear-norm ball, the Birkhoff polytope, or the permutahedron (see, e.g., \cite{combettes2021complexity}).} This is the reason why, in recent years, projection-free methods (see, e.g., \cite{bomze2021frank,jaggi2013revisiting,lan2020first}) have been massively used when dealing with those structured constraints. 

These methods simply rely on a suitable oracle  that minimizes, at each iteration, a linear approximation of the function over the original feasible set,  returning a point in
$$\argmin_{\x \in \OM} \s{\g}{\x}.$$
When $\OM$ is defined as in \eqref{d:Omega}, this decomposes in $m$ independent problems thanks to the block separable structure of the feasible set. In turn, the resulting problems on the blocks can then be solved in parallel, a possibility that has widely been explored in the literature, especially in the context of traffic assignment (see, e.g., \cite{leblanc1975efficient}).  In a big data context, performing a full update of the variables might still represent a computational bottleneck that needs to be properly handled in practice. This is the reason why block-coordinate variants of the classic Frank-Wolfe (FW) method have been recently proposed (see, e.g., \cite{lacoste2012block,osokin2016minding,wang2016parallel}). 
This method  is proposed in~\cite{lacoste2012block}
for structured support vector machine training, and randomly selects a block at each iteration to perform an FW update on the block. Several improvements on this algorithm, e.g., adaptive block sampling, use of pairwise and away-step directions, or oracle call caching, are described in \cite{osokin2016minding}, which obviously 
work in a sequential fashion. 

However, in case one wants to take advantage of modern multicore architectures or of distributed clusters, parallel and distributed versions of the block-coordinate FW algorithm are also available \cite{wang2016parallel}. It is important to highlight that all the papers mentioned above only consider convex programming problems and use random sampling variants as the main block selection strategy.

Furthermore, as noticed in \cite{osokin2016minding}, the standard convergence analysis for FW variants (e.g., pairwise and away step
FW) cannot be easily extended to the block-coordinate case. \damiano{In particular, there has been no extension in this setting of the well known linear convergence rate guarantees for FW variants applied to strongly convex objectives (see \cite{bomze2021frank} and references therein).}
This is mainly 
due to the difficulties in handling the bad/short steps (i.e., those steps  that do not give a good progress and are taken to guarantee feasibility of the iterate) within a block-coordinate framework. \damiano{In~\cite{osokin2016minding}, the authors hence extend the convergence analysis of FW variants to the block coordinate setting under the strong assumption that there are no bad steps, claiming that novel}
proof techniques are required to carry out the analysis in general and close the gap between FW and BCFW in this context.

Here we focus on the non-convex case and define a new general block-coordinate algorithmic framework that gives flexibility in the use of both block selection strategies and FW-like directions. Such a flexibility is mainly obtained thanks to  the way we perform approximate minimizations in the blocks. At each iteration, after selecting  one  block at least, we indeed use the Short Step Chain (SSC) procedure described in \cite{rinaldi2020avoiding}, which skips gradient computations in consecutive short steps until proper conditions are satisfied, to get the approximate minimization done in the selected blocks. 

Concerning the block selection strategies, we explore three different options. The first one we consider is a parallel or Jacobi-like strategy (see, e.g., \cite{bertsekas2015parallel}), where the SSC procedure is performed for all blocks. This obviously reduces the computational burden with respect to the use of the SSC in the whole variable space (see, e.g.,  \cite{rinaldi2020avoiding}) and eventually enables to use multicore architectures to perform those tasks in parallel. The second one is the random sampling (see, e.g.,   \cite{lacoste2012block}), where the SSC procedure is performed at each iteration on  a randomly selected subset of blocks. Finally we have a variant of the Gauss-Southwell rule (see, e.g., \cite{luo1992convergence}), where we perform SSC in all blocks and then select a block which violates optimality conditions at most. Such a greedy rule may make more progress in the objective function, since it uses first order information to choose the right block, but is, in principle, more expensive than  the other options we mentioned before (notice that the SSC is performed, at each iteration, for all blocks). 

Furthermore, we consider the following projection-free strategies: Away-step Frank-Wolfe (AFW), Pairwise Frank-Wolfe (PFW), and Frank-Wolfe
method with in face directions (FDFW), see, e.g., \cite{rinaldi2020avoiding} and references therein for further details.
The AFW and PFW strategies  depend on a set of ``elementary atoms'' $A$ such that $\OM = \conv(A)$. Given $A$, for a base point $\x \in \OM$ we can define
$$S_\x = \{S \subset A : \x \tx{ is a proper convex combination of all the elements in }S \} \, ,$$ the family of possible active sets for a given point $\x$.  For $\x \in \OM$ and $S \in S_\x$, $\d^{\tx{PFW}}$ is a PFW direction with respect to the active set $S$ and gradient $-\g$ if and only if
\begin{equation} \label{eq:PFW}
	\d^{\tx{PFW}} = \ss - \q \textnormal{ with } \ss \in \argmax_{\ss \in \OM} \s{\ss}{\g} \textnormal{ and } \q \in \argmin_{\q \in S} \s{\q}{\g} \, .
\end{equation}
Similarly, given $\x \in \OM$ and $S \in S_\x$, $\d^{\tx{AFW}}$ is an AFW direction with respect to the active set $S$ and gradient $-\g$ if and only if
\begin{equation} \label{AFWdir}
	\d^{\tx{AFW}} \in \argmax \{\s{\g}{\d} : \d \in \{\d^{\tx{FW}}, \d^{\tx{AS}}\} \} \, ,
\end{equation}
where $\d^{\tx{FW}}$ is a classic Frank-Wolfe direction
\begin{equation} \label{eq:FWstep}
	\d^{\tx{FW}}  = \ss - \x  \textnormal{ with } \ss \in {\argmax}_{\ss \in \OM} \s{\ss}{\g} \, ,
\end{equation}
and $\d^{\tx{AS}}$ is the away direction 
\begin{equation} \label{eq:ASstep}
	\d^{\tx{AS}}  = 	\x-\q  \textnormal{ with } \q \in \argmin_{\q \in S} \s{\q}{\g} \,  .
\end{equation}

The FDFW only requires the current point $\x$ and   gradient $-\g$ to select a descent direction (i.e., it does not need to keep track of the active set) and is defined as
\begin{equation*}
	\d^{F} = \x - \x_{F} \tx{ with } \x_{F} \in \argmin\{\s{\g}{\y} : \y \in \F(\x) \} 
\end{equation*}
for $\F(\x)$ the minimal face of $\OM$ containing $\x$. 
The selection criterion is then analogous to the one used by the AFW:
\begin{equation} \label{crit:FD}
	\d^{\tx{FD}} \in \tx{argmax} \{ \s{\g}{\d} : \d \in \{\d^{F}, \d^{\tx{FW}} \} \} \, .
\end{equation}

From a theoretical point of view, this new algorithmic framework enables us to give:
\begin{itemize}
\item  a local linear convergence rate for any choice of block selection strategy and FW-like direction. This result is obtained under a Kurdyka-\L ojasiewicz (KL)  property (see, e.g., \cite{attouch2010proximal}, \cite{bolte2007clarke} and \cite{bolte2010characterizations}) and a tailored angle condition (see, e.g., \cite{rinaldi2020avoiding}). Thanks to the way we handle short steps in our framework we are thus able to extend the analysis given for FW variants to the block-coordinate case and then 
to close the relevant gap in the theory  highlighted in \cite{osokin2016minding}. 
\item a local active set identification result (see, e.g., \cite{bomze2019first,bomze2020active,bomze2022fast,garber2020revisiting})  for a specific structure of the Cartesian product defining  the feasible set $\OM$, suitable choices of projection-free strategy (i.e., AFW direction is used), \damiano{and general smooth non convex objectives}. In particular, we prove that our framework identifies in finite time the support of a solution. Such a theoretical feature allows to reduce the
dimension of the problem at hand and, consequently, the overall computational cost of the optimization procedure. 
\end{itemize}

This is, to the best of our
knowledge, the first time that both a
(bad step free) linear convergence rate and an active set identification result are given for block-coordinate FW variants. In particular, we solve the open question from \cite{osokin2016minding} discussed above, \damiano{proving that the linear convergence rate of FW variants can indeed be extended to the block coordinate setting}. \damiano{Furthermore, our results guarantee,  for the first time in the literature of projection free optimization methods, identification  of the local active set in a {\sl single iteration} without a tailored active set strategy.}

We also report some preliminary numerical results on a specific class of structured problems with a block separable feasible set. Those results show that the proposed framework 
outperforms the classic block-coordinate FW and, thanks to its flexibility,
it can be effectively embedded into a probabilistic global optimization framework thus significantly boosting  its performances.

The paper is organized as follows.
Section \ref{s:pdalgorithm} describes the details of our new algorithmic framework. An in-depth analysis of its convergence properties is reported in Section \ref{s:crates}. An active set identification result is reported in Section \ref{s:actid}. Preliminary numerical results, focusing on the computational analysis of both the local identification and  the convergence properties of our framework, are reported in Section \ref{sec:numres}. Finally, some concluding remarks are included in Section  \ref{sec:conc}.

\subsection{Notation}

For a closed and convex set $C \subset \rr^h$  we denote by $\pi(C, \x)$ the projection of  $\x \in \rr^h$ onto $C$, and by $T_{C}(\x)$ the tangent space to $C$ at $\x\in C$. For $\g \in \rr^h$ we also use $\pi_\x(\g)$ as a shorthand for $\n{\pi(T_{C}(\x), \g)}$. We denote by $\hat{\y}$ the vector $\frac{\y}{\n{\y}}$ for $\y \neq \oo$, and $\hat{\y}=\oo$ otherwise. We finally denote by $\bar{B}_r(\x)$ and $B_r(\x)$ the closed and open balls of radius $r$ centered at $\x$.  

\section{A new block-coordinate projection-free  method } \label{s:pdalgorithm}

The block-coordinate framework we consider here applies the Short Step Chain (SSC) procedure from \cite{rinaldi2020avoiding}, described below as Algorithm~\ref{algo4}, to some of the blocks at every iteration. A detailed scheme is specified as Algorithm~\ref{algo6}; recall notation  $\x = (\x^{(1)}, ..., \x^{(m)})$ with $\x^{(i)}\in \OM_{(i)}$, all $i\in [1\! : \! m]$.

\begin{algorithm}
	\caption{Block coordinate method with  Short Step Chain (SSC) procedure}\label{algo6}
	\begin{algorithmic}[1]
		\State $\x_0 \in \OM$, $k = 0$.     
		\State If $\x_k$ is stationary, then STOP \label{algos5}
		\State Choose $ \mathcal{M}_k \subset [1 \! : \! m]$.
		\State For all $i \notin \mathcal{M}_k$ set $\x_{k + 1}^{(i)} = \x_k^{(i)}$
		\State For all $i \in \mathcal{M}_k$ set $\x_{k + 1}^{(i)} = \textnormal{SSC}(\x_k^{(i)},  -\nabla f(\x_k)^{(i)}))$
		\State $k = k+1$. Go to step \ref{algos5}.  
	\end{algorithmic}
\end{algorithm}
\par
In Algorithm~\ref{algo6}, we perform two main operations at each iteration. First, in Step 3,
we pick a suitable subset of blocks $\mathcal{M}_k$ according to a given block selection strategy. We then update  (Steps 4 and 5) the variables related to the selected blocks by means of the SSC procedure, while keeping all the variables in the other blocks unchanged.  

We now briefly recall the  SSC procedure from \cite{rinaldi2020avoiding}, designed to recycle the gradient in consecutive bad steps until suitable stopping conditions are met, in Algorithm \ref{algo4}.

\begin{algorithm}
	\caption{Short Step Chain procedure -- $\tx{SSC}(\bar \x,\g)$}\label{algo4}
	\begin{algorithmic}[1]
		\State \textbf{Initialization.} $\y_0 = \bx$, $j=0$ 
		\State Select $ \d_j \in \cA(\y_j, \g)$, $\alpha^{(j)}_{\max} \in \alpha_{\max}(\y_j, \d_j)$ \label{linealgo4}
		\If{$\d_j = 0$} 
		\Return  $\y_j$ 
		\EndIf
		\State compute an auxiliary step size $\beta_j$				 		 		
		\State let $\alpha_j = \min(\alpha^{(j)}_{\max}, \beta_j)$ 			
		\State $\y_{j+1} = \y_j + \alpha_j \d_j$ 
		\If{$\alpha_j =  \beta_j$}
		\Return $\y_{j+1}$
		\EndIf
		\State $j = j+1$, go to Step \ref{linealgo4} 
	\end{algorithmic}
\end{algorithm}
By $\mathcal{A}$ we indicate a projection-free strategy to generate first-order feasible descent directions for smooth functions on the block where the SSC is applied (e.g., FW, PFW, AFW directions). Since the gradient, $-\g$, is constant during the SSC procedure, it is easy to see that the procedure represents an application of $\cA$ to  minimize the linearized objective $f_\g(\z) = \s{ - \g}{\z - \bar{\x}} + f(\bar{\x})$, with suitable stepsizes and stopping condition. More specifically, after a stationarity check (see Steps 2--4), the stepsize $\alpha_j$ is the minimum of an auxiliary stepsize $\beta_j>0$ and the maximal stepsize $\alpha^{(j)}_{\max}$ (which we always assume to be strictly positive). The point $\y_{j + 1}$ generated at Step 7 is always feasible since $\alpha_j \leq \alpha^{(j)}_{\max}$. Notice that if the method $\mathcal{A}$ used in the SSC performs a FW step (see equation \eqref{eq:FWstep} for the definition of FW step), then the SSC terminates, with $\alpha_j = \beta_j$ or with $\y_{j + 1}$ a global minimizer of $f_\g$.

The auxiliary step size $\beta_j$ (see Step 5 of the SSC procedure) is thus defined as the maximal feasible stepsize (at $\y_j$) for the trust region
\begin{equation} \label{eq:omegaaux}
	\Omega_j =   B_{\n{\g}/2L}(\bar{\x} + \frac{\g}{2L}) \cap B_{\s{\g}{\hat{\d}_j}/L}(\bar{\x})  \, .
\end{equation} 
This guarantees the sufficient decrease condition
\begin{equation} \label{eq:dcondy}
	f(\y_j)  \leq f(\x_k) - \frac{L}{2}\n{\x_k - \y_j}^2 
\end{equation}	
and hence a monotone decrease of $f$ in the SSC.  For further details see \cite{rinaldi2020avoiding}.

\subsection{Block selection strategies}

As briefly mentioned in the introduction, we will consider three different block selection strategies in our analysis. The first one is a parallel or Jacobi-like strategy (see, e.g., \cite{bertsekas2015parallel}). In this case, we select all the blocks at each iteration. As we already observed, this is computationally cheaper than handling the whole variable space at once. Furthermore,  multicore architectures might eventually be considered to perform those tasks in parallel.
 A definition of the strategy is given below:

 \begin{definition}[Parallel selection] \label{d:parallel}
	Set $\cM_k = [1 \! : \! m]$.
\end{definition}

The second strategy is a variant of the GS rule (see, e.g., \cite{luo1992convergence}), where we first perform SSC in all blocks and then select a block that violates optimality conditions at most. The formal definition is reported below.

\begin{definition}[Gauss-Southwell (GS) selection] \label{d:GS}
	Set $\cM_k= \{i(k)\}$, with
	$$i(k) \in \argmax_{i \in [1  :  m]} \Sc{\g^{(i)}}{  \SSC(\x_k^{(i)}, -\nabla f(\x_k)^{(i)})-\x_{k}^{(i)} }.$$
\end{definition}

Finally, we have random sampling (see, e.g.,   \cite{lacoste2012block}). Here we randomly generate one index at each iteration with uniform probability distribution. The definition we have in this case is the following:

\begin{definition}[Random sampling] \label{d:random}
	Set $\cM_k = \{i(k)\}$, with $i(k)$ index chosen uniformly at random in~$[1 \! : \! m]$.
\end{definition}

\section{Convergence analysis} \label{s:crates}

In this section, we analyze the convergence properties of our algorithmic framework. In particular, we show that under a suitably defined angle condition on the blocks and a local KL condition on the objective function, we get,  for any block selection strategy used, a linear convergence rate. \damiano{The convergence analysis presented in this section extends the results given in \cite{rinaldi2020avoiding}
to the block coordinate setting, a demanding task which is by no means straightforward. Hence, some novel arguments are required for this extension, which are now introduced, and then described in detail in the appendix.} 

Our convergence framework makes use of the angle condition introduced in \cite{rinaldi2020unifying,rinaldi2020avoiding}. Such a condition ensures that the slope of the descent direction selected by the method is optimal up to a constant. We now recall this angle condition. For $\x \in \OM$ and $\g \in \rr^n$ we first define the directional slope lower bound as 
\begin{equation}
	\tx{DSB}_{\cA}(\OM, \x, \g) = \inf_{\d \in \cA(\x,\g)} \frac{\s{\g}{\d}}{\pi_\x(\g) \n{\d}} ,
\end{equation}
if $\x$ is not stationary for $-\g$, otherwise we set $\tx{DSB}_{\mathcal{A}}(\OM, \x, \g) = 1$. We then define the slope lower bound as
\begin{equation}
	\tx{SB}_{\cA}(\OM, P) = \inf_{\substack{\g \in \rr^n \\ \x \in P}} \tx{DSB}_{\cA}(\OM, \x, \g) = \inf_{\substack{\g: \pi_\x(\g) \neq 0 \\ \x \in P}} \tx{DSB}_{\cA}(\OM, \x, \g)  \, .
\end{equation}
We use $\tx{SB}_{\cA}(\OM)$ as a shorthand for $\tx{SB}_{\cA}(\OM, \OM)$, and say that the angle condition holds for the method $\cA$ if
\begin{equation} \label{eq:ang_cond}
	\tx{SB}_{\cA}(\OM) = \tau > 0 \, .
\end{equation}

\begin{remark}
AFW,  PFW and FDFW all satisfy the angle condition, when $\OM$ is a polytope. A detailed proof of this result is reported in \cite{rinaldi2020avoiding}, \damiano{together with some other examples of methods satisfying the angle condition for convex sets with smooth boundary described in \cite{rinaldi2020unifying}.}
\end{remark}

\newcommand{\qR}{q_R}
\newcommand{\qGS}{q_{GS}}
\newcommand{\qP}{q_P}

We now report the local KL condition  used to analyze the convergence of our algorithm. \damiano{The same condition was used  previously  as well~\cite[Assumption 2.1]{rinaldi2020avoiding}.}

\begin{assumption} \label{ass:KL}
	Given a stationary point $\x_* \in \OM$, there exists $\eta, \delta > 0$ such that for every 
 $\x \in B_{\delta}(\x_*)$ with $f(\x_*) < f(\x) < f(\x_*) + \eta$ we have
	\begin{equation} \label{hp:klP}
		\pi_\x(-\nabla f(\x)) \geq \ck [f(\x) - f(\x_*)]^{\frac{1}{2}} \, .
	\end{equation}
\end{assumption}

When dealing with convex programming problems, a H\"olderian error bound with exponent 2 on the solution set   implies  condition \eqref{hp:klP}, see  \cite[Corollary 6]{bolte2017error}. Therefore, our assumption holds when dealing with $\mu$-strongly convex functions (see, e.g., \cite{karimi2016linear}), \damiano{and in particular for the setting of the open question from \cite{osokin2016minding} discussed in the introduction}. It is however important to highlight that the error bound \eqref{hp:klP} holds in a variety of both convex and non-convex settings (see \cite{rinaldi2020avoiding} for a detailed discussion on this matter). An interesting example  for our analysis is the setting where $f$ is (non-convex) quadratic, i.e., $f(\x) = \x^{\top}\Qb\x + \b^{\top}\x$, and $\OM$ is a polytope.

We now report our main convergence result.

\begin{theorem} \label{th:bcrandom}
	Let Assumption~\ref{ass:KL} hold at $\x_*$. Let us  consider the sequence $\{\x_k\}$ generated by  Algorithm~\ref{algo6}. Assume that:
	\begin{itemize}
		\item the angle condition \eqref{eq:ang_cond} holds in every block for the same $\tau > 0$;
		\item the SSC procedure always terminates in a finite number of steps.
		\item $f(\x_*)$ is a minimum in the connected component of $ \{ \x\in \OM : f(\x) \leq f(\x_0)\} $ containing $\x_0$.
	\end{itemize}
	Then, there exists $\tilde{\delta} > 0$ such that, if $\x_0 \in B_{\tilde{\delta}}(\x_*)$:
	\begin{itemize}
		\item for the parallel block selection strategy, we have 
		\begin{equation} \label{th:qdecP}
			f(\x_k) - f(\x_*) \leq (\qP)^k [f(\x_0) - f(\x_*)]	\, ,
		\end{equation}
		and $\x_k \rightarrow \tilde{\x}_*$ with
		\begin{equation} \label{th:taillenP}
			\n{\x_k - \tilde{\x}_*} \leq \frac{\sqrt{2-2\qP}}{\sqrt{L}(1-\sqrt{\qP})}\, (\qP)^{\frac{k}{2}}[f(\x_0) - f(\tilde \x_*)] \, ,
		\end{equation}
		for 
		$$ \qP =   1 - \frac{\mu \tau^2}{4L(1 + \tau)^2}.$$
		\item for the GS block selection strategy, we have 
		\begin{equation} \label{th:qdecGS}
			f(\x_k) - f(\x_*) \leq (\qGS)^k [f(\x_0) - f(\x_*)]	\, ,
		\end{equation}
		and $\x_k \rightarrow \tilde{\x}_*$ with
		\begin{equation} \label{th:taillenGS}
			\n{\x_k - \tilde{\x}_*} \leq \frac{\sqrt{2-2\qGS}}{\sqrt{L}(1-\sqrt{\qGS})}\, (\qGS)^{\frac{k}{2}}[f(\x_0) - f(\tilde \x_*)] \, ,
		\end{equation}
		for
		$$\qGS =  1 - \frac{\mu \tau^2}{4mL(1 + \tau)^2} \, ,  $$ 
		
	 \item for the random block selection strategy we have, under the additional condition that 
  \begin{equation}\label{tech}\min\{ f(\x) : \n{\x - \x_*} = \delta\}> f(\x_*)\end{equation} 
	holds for some $ \delta > 0$,  that	
 \begin{equation} \label{th:qdecr}
			\mE[f(\x_k) - f(\x_*)] \leq (\qR)^k [f(\x_0) - f(\x_*)]	\, ,
		\end{equation}
		and $\x_k \rightarrow \tilde{\x}_*$ almost surely with
		\begin{equation} \label{th:taillenr}
			\mE[\n{\x_k - \tilde{\x}_*}] \leq \frac{\sqrt{2-2\qR}}{\sqrt{L}(1-\sqrt{\qR})}\, (\qR)^{\frac{k}{2}}[f(\x_0) - f(\tilde \x_*)]
		\end{equation}
		for $\qR = \qGS$. 
\end{itemize}
\end{theorem}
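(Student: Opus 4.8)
The plan is to reduce all three strategies to a single per-iteration (or per-iteration-in-expectation) contraction of the optimality gap $f(\x_k)-f(\x_*)$, and then to run a Kurdyka--\L ojasiewicz--type telescoping argument to obtain convergence of the iterates together with the stated tail bounds. Write $\g = -\nabla f(\x_k)$, $\y^{(i)} = \SSC(\x_k^{(i)},\g^{(i)})$, so that $\y^{(i)}-\x_k^{(i)}$ is the displacement produced in block $i$. The two facts I would import from the SSC analysis in \cite{rinaldi2020avoiding} are: first, that every SSC iterate lies in the ``descent ball'' $B_{\n{\g^{(i)}}/2L}(\x_k^{(i)}+\g^{(i)}/2L)$ appearing in \eqref{eq:omegaaux}, which rearranges to $\Sc{\g^{(i)}}{\y^{(i)}-\x_k^{(i)}} \ge L\n{\y^{(i)}-\x_k^{(i)}}^2$; and second, a displacement lower bound $\n{\y^{(i)}-\x_k^{(i)}} \ge \frac{\tau}{2L(1+\tau)}\,\pi_{\x_k^{(i)}}(\g^{(i)})$ coming from the angle condition through the radius $\Sc{\g^{(i)}}{\hat\d_j}/L \ge \tau\,\pi_{\x_k^{(i)}}(\g^{(i)})/L$ of the second ball in \eqref{eq:omegaaux} (the constant being exactly the one that reproduces the rates below). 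Combining the descent-ball inequality with the descent lemma gives, for a single-block update, both $f_{(i)}(\y^{(i)}) \le f(\x_k) - \frac{L}{2}\n{\y^{(i)}-\x_k^{(i)}}^2$ and $f(\x_k)-f_{(i)}(\y^{(i)}) \ge \tfrac12\Sc{\g^{(i)}}{\y^{(i)}-\x_k^{(i)}}$, where $f_{(i)}$ denotes $f$ with the other blocks frozen at $\x_k$. Finally, the product structure \eqref{d:Omega} splits the tangent space as $T_\OM(\x) = \prod_i T_{\OM_{(i)}}(\x^{(i)})$, so that $\pi_\x(-\nabla f(\x))^2 = \sum_{i=1}^{m} \pi_{\x^{(i)}}(-\nabla f(\x)^{(i)})^2$; this is the identity that glues the block estimates to the global KL inequality \eqref{hp:klP}.

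For the parallel strategy all blocks share $\nabla f(\x_k)$, so applying the descent lemma to the full step and using that both its linear and quadratic parts separate over blocks, the descent-ball inequality forces each block's contribution below $-\frac{L}{2}\n{\y^{(i)}-\x_k^{(i)}}^2$, giving the combined sufficient decrease $f(\x_{k+1}) \le f(\x_k) - \frac{L}{2}\sum_i\n{\y^{(i)}-\x_k^{(i)}}^2$. Inserting the displacement bound and the tangent-space splitting gives $f(\x_k)-f(\x_{k+1}) \ge \frac{\tau^2}{8L(1+\tau)^2}\,\pi_{\x_k}(-\nabla f(\x_k))^2$, and the KL inequality $\pi_{\x_k}(-\nabla f(\x_k))^2 \ge 2\mu[f(\x_k)-f(\x_*)]$ turns this into $f(\x_{k+1})-f(\x_*) \le \qP[f(\x_k)-f(\x_*)]$, which is \eqref{th:qdecP}.

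For the Gauss--Southwell rule only the block $i(k)$ maximizing $\Sc{\g^{(i)}}{\y^{(i)}-\x_k^{(i)}}$ is updated, so the actual decrease is $f(\x_k)-f(\x_{k+1}) \ge \tfrac12\Sc{\g^{(i(k))}}{\y^{(i(k))}-\x_k^{(i(k))}} \ge \frac{1}{2m}\sum_i\Sc{\g^{(i)}}{\y^{(i)}-\x_k^{(i)}}$, because the maximum dominates the average; bounding each model term below by $L\n{\y^{(i)}-\x_k^{(i)}}^2 \ge \frac{\tau^2}{4L(1+\tau)^2}\pi_{\x_k^{(i)}}(\g^{(i)})^2$ and summing as before produces the extra factor $1/m$, hence \eqref{th:qdecGS} with $\qGS = 1-\frac{\mu\tau^2}{4mL(1+\tau)^2}$. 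For uniform random sampling the conditional expectation of the decrease equals $\frac1m\sum_i[f(\x_k)-f_{(i)}(\y^{(i)})]$, and since the average is exactly the quantity the GS maximum dominates, the same chain of inequalities gives $\mE[f(\x_{k+1})-f(\x_*)\mid\x_k] \le \qGS[f(\x_k)-f(\x_*)]$; taking total expectations and iterating yields \eqref{th:qdecr} with $\qR = \qGS$.

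It remains to keep the iterates in the ball $B_\delta(\x_*)$ where \eqref{hp:klP} is available, and to pass from the rate on function values to convergence of $\x_k$. For the two deterministic strategies I would run the usual self-consistent induction: the per-step estimate $\n{\x_{k+1}-\x_k} \le \sqrt{2/L}\,[f(\x_k)-f(\x_{k+1})]^{1/2}$ together with the geometric decay of the gap makes $\sum_k\n{\x_{k+1}-\x_k}$ a convergent geometric-type series, so choosing $\tilde\delta$ small (hence $f(\x_0)-f(\x_*)$ small, the third hypothesis keeping $f(\x_k)\ge f(\x_*)$ within the connected component) keeps every iterate inside $B_\delta(\x_*)$, forces $\x_k\to\tilde\x_*$ with $f(\tilde\x_*)=f(\x_*)$, and produces \eqref{th:taillenP} and \eqref{th:taillenGS} by summing the tail of that series. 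I expect the random case to be the main obstacle, since there the contraction holds only in expectation and a single bad realization could leave $B_\delta(\x_*)$ and invalidate \eqref{hp:klP} at the next step. This is exactly what the extra hypothesis \eqref{tech} repairs: because $f$ decreases monotonically along the entire SSC path, once $f(\x_0)$ lies below the barrier level $\min\{f(\x):\n{\x-\x_*}=\delta\}$ no realization can cross the sphere $\n{\x-\x_*}=\delta$, giving deterministic confinement for every sample path; thereafter the expected contraction is legitimate at every step, the bound $\mE\sum_k\n{\x_{k+1}-\x_k} < \infty$ yields almost sure convergence of $\x_k$ to a single limit $\tilde\x_*$, and summing the tail in expectation gives \eqref{th:taillenr}.
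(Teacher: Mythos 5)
Your overall architecture (per-iteration contraction from the angle condition plus KL, factor $1/m$ for GS via max-dominates-average, conditional expectation for random sampling, self-consistent confinement induction, and the role of \eqref{tech} as a barrier that monotone decrease cannot cross) matches the paper's proof closely. However, the second fact you ``import'' from the SSC analysis is false, and it is exactly the crux the theorem has to deal with. You claim a displacement bound at the base point, $\n{\y^{(i)}-\x_k^{(i)}} \geq \frac{\tau}{2L(1+\tau)}\,\pi_{\x_k^{(i)}}(\g^{(i)})$, arguing from the radius $\Sc{\g^{(i)}}{\hat{\d}_j}/L$ of the second ball in \eqref{eq:omegaaux}. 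But the angle condition controls $\Sc{\g}{\hat{\d}_j} \geq \tau\,\pi_{\y_j}(\g)$ at the \emph{current} chain point $\y_j$, not at $\y_0 = \x_k^{(i)}$ (your inequality is only valid for $j=0$), and the SSC may terminate with $\d_T = \oo$, i.e.\ at a point stationary for the linearized objective, after one or more \emph{maximal} steps whose total length has nothing to do with $\pi_{\x_k^{(i)}}(\g^{(i)})$. Concretely, take $\OM_{(i)} = [0,1]$, $\g^{(i)} = 1$, $\x_k^{(i)} = 1-\varepsilon$: the FW direction gives a maximal step $\alpha_0 = \alpha^{(0)}_{\max}$ onto the vertex $1$ (for small $\varepsilon$ the trust region is not binding, so $\alpha_0 \neq \beta_0$ and the chain continues), where $\d_1 = \oo$ and the SSC returns $\y_1$; the displacement is $\varepsilon$ while $\pi_{\x_k^{(i)}}(\g^{(i)}) = 1$, so your bound fails for every fixed $\tau, L$. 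With it collapses the one-line derivation of all three contractions: this is precisely the ``bad/short step'' obstruction from \cite{osokin2016minding} that the theorem is built to circumvent.

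The paper's repair is structural, not cosmetic. Lemma \ref{SSC:property} proves the displacement bound \eqref{eq:Tcone} with the tangent-cone projection evaluated at a carefully chosen \emph{intermediate} chain point $\tilde{\w}_k$ (four termination cases; in the stationary case the bound holds trivially because the projection vanishes \emph{there}, which is why no bound at $\x_k$ is possible), together with $\n{\tilde{\w}_k - \w_k} \leq \n{\w_{k+1}-\w_k}$ and \eqref{eq:gwkwk}. The KL inequality is then applied at the intermediate vector $\bar{\x}_k$ rather than at $\x_k$, and this costs two bridging estimates that your proposal has no analogue of: $f(\x_k)-f(\x_{k+1}) \gtrsim f(\bar{\x}_k)-f^*$ (Lemma \ref{l:Epineq}, after a Lipschitz correction $\bar{q}_{(i)} \leq q_{(i)} + L\n{\bar{\x}_k^{(i)}-\x_k^{(i)}}$ to move the gradient from $\x_k$ to $\bar{\x}_k$) and $f(\x_k)-f(\x_{k+1}) \geq \frac{1}{3}[f(\x_k)-f(\bar{\x}_k)]$ (or $\frac{1}{3m}$; Lemma \ref{l:Epineq2}); only their combination in Lemma \ref{l:crate_main} yields the contraction of $f(\x_k)-f^*$. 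Correspondingly, the confinement induction must keep $\bar{\x}_k$, not just $\x_k$, inside $B_\delta(\x_*)$, which the paper gets from \eqref{eq:nmn} in \eqref{eq:inball2}. Your treatment of the random case (confinement of every sample path from \eqref{tech} plus monotonicity, then $\mE\sum_k \n{\x_{k+1}-\x_k} < \infty$ giving a.s.\ convergence) is sound and is essentially the paper's argument, phrased there as $U_0 \subset B_\delta(\x_*)$ via Lemma \ref{l:preliminary}; but without the intermediate-point mechanism the decrease inequality feeding all of this is unproven.
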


\damiano{This convergence result extends \cite[Theorem 4.2]{rinaldi2020avoiding} to our block coordinate setting.
However, since the SSC is here applied independently to different blocks, we cannot directly apply the results from \cite{rinaldi2020avoiding}. Instead, we combine the properties of the SSC applied in single blocks by exploiting the structure of the tangent cone for product domains:
	\begin{equation}
		T_{\OM}(\x) =  T_{\OM_{(1)}} (\x^{(1)})
  \times \cdots \times T_{\OM_{(m)}} (\x^{(m)})\, .
	\end{equation} 
This requires proving stronger properties for the sequence generated by the SSC than those presented in \cite{rinaldi2020avoiding}. The details with references to relevant results from \cite{rinaldi2020avoiding} can be found in the appendix. Finite termination of the SSC procedure is instead  directly ensured by the results proved in \cite{rinaldi2020unifying,rinaldi2020avoiding}, in particular for the AFW, PFW and FDFW applied on polytopes.}

\begin{remark}
	If the feasible set $\OM$ is a polytope and if we assume that the objective function $f$  satisfies condition~\eqref{hp:klP} on every point generated by the algorithm, with fixed $f(\x_*)$, then Algorithm~\ref{algo6} with AFW (PFW or FDFW) in the SSC converges at the rates given above. Condition~\eqref{hp:klP} holds in case of  $\mu$-strongly convex functions, and hence we have that in those cases our algorithm globally converges with the rates given in Theorem~\ref{th:bcrandom}. 
\end{remark}

\begin{remark}
	 Both the parallel and the GS strategy give the same rate with different constants. In particular, the constant ruling the GS case depends on the number of blocks used (the larger the number of blocks, the worse the rate) and is larger than the one we have for the parallel case.
\end{remark}

\begin{remark}
	 The random block selection strategy has the same rate as the GS strategy, but it is given in expectation.
In particular, the constant ruling the rate is the same as the GS one, hence  depends on the number of blocks used. Note that a further technical assumption~\eqref{tech} on $\x_*$  is needed in this case.
\end{remark}

	\section{Active set identification} \label{s:actid}
	We now report an active set identification result for our framework. We only focus on Algorithm~\ref{algo6} with AFW in the SSC and assume that strict complementarity holds and that the sets in the Cartesian product have a specific structure:

	\begin{equation}
		\OM =  \Delta^{n_1} \times ... \times \Delta^{n_m},
	\end{equation}
so	 that  the set $\OM_{(i)}$  is the $(n_i - 1)$-dimensional standard simplex $$\Delta^{n_i}=\{ \x\in \rr^{n_i}_+ :   \x^\top \e^{(i)}=1\}\, ,\quad i \in [1 \! : \! m]\, ,$$ for $\e \in \rr^{n}$ the vector with components all equal to 1.    We now report our main identification result.  A detailed proof is included in the appendix. 

	\begin{theorem} \label{th:actid}
		Under the above assumptions on $\OM$, let $\mathcal{A}^{(i)}$ be the AFW for $i \in [1 \! : \! m]$, and let strict complementarity conditions hold at $\x_* \in \OM$. 
		\begin{itemize}
			\item If $\{\x_k\}$ is generated by Algorithm \ref{algo6} with parallel selection, then there exists a neighborhood $U$ of $\x_*$ such that if $\x_k \in U$ then $\tx{supp}(\x_{k + 1}) = \tx{supp}(\x_*)$.
			\item If $\{\x_k\}$ is generated by Algorithm \ref{algo6} with randomized or GS selection, then there exists a neighborhood $U$ of $\x_*$ such that if $\x_k \in U$ then $\tx{supp}(\x_{k + 1}^{i(k)}) = \tx{supp}(\x_*^{i(k)})$.
		\end{itemize}
	\end{theorem}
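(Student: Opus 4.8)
The plan is to work entirely inside a small ball $U=B_\delta(\x_*)$ on which strict complementarity provides a uniform gradient separation between the support and the zero coordinates of $\x_*$, and then to examine a single SSC call block by block. Because the gradient $-\g$ is frozen during an SSC call, the subproblem the AFW faces on each block is the minimization of a linear form over a simplex, so the whole argument reduces to showing that on this linear subproblem the AFW drops exactly the coordinates outside $\supp(\x_*^{(i)})$ and keeps those inside it.

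First I would turn strict complementarity into quantitative constants. Writing $I_+^{(i)}=\supp(\x_*^{(i)})$ and $I_0^{(i)}$ for its complement, continuity of $\nabla f$ yields $c,\rho,\delta>0$ such that, for every $\x\in U$ and every block $i$, the smallest non-support gradient entry exceeds the largest support entry by at least $c$, every support coordinate satisfies $\x_j\ge\rho$, and every non-support coordinate satisfies $0\le\x_j\le\delta$. On a simplex the linearized minimizer is the vertex $\e_{j^*}$ with $j^*\in\argmin_j[\nabla f(\x)]_j$, which the gap forces into $I_+^{(i)}$; hence every FW step performed inside the SSC adds mass only to support coordinates, and whenever a non-support coordinate is still active it is the unique away target, since it carries the largest gradient among the active vertices.

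The core is a bookkeeping of the SSC iterates on a fixed block. I would first bound the total displacement of one SSC call: the sufficient-decrease inequality \eqref{eq:dcondy} gives $\n{\x_k^{(i)}-\y_j}\le\sqrt{(2/L)(f(\x_k)-f(\y_j))}$, and since $\x_k$ is near the stationary point $\x_*$ this bound tends to $0$ as $\delta\to 0$, so $\delta$ can be chosen to force the whole SSC displacement below $\rho/2$. Two consequences of the gap then finish the block analysis. (i) For an away step whose target lies in $I_0^{(i)}$ the slope $\s{\g}{\hat\d^{\tx{AS}}}$ is bounded below by a constant of order $c$, so the trust-region step $\beta_j$ in \eqref{eq:omegaaux} has length bounded below by a constant of order $c/L$; since the maximal (drop) away step only has to move the at-most-$\delta$ mass on that vertex, it is shorter than $\beta_j$, whence $\alpha_j=\alpha^{(j)}_{\max}$ is a genuine drop that does not stop the SSC. (ii) The same gap makes the AFW prefer this away step while a non-support vertex is active, because $\s{\nabla f}{\y_j}$ stays within order $\delta$ of the common support value whereas the non-support entry exceeds it by at least $c$. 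As the fixed FW vertex lies in $I_+^{(i)}$, a dropped coordinate never re-enters, so after finitely many drops all of $I_0^{(i)}$ is gone. Meanwhile no support coordinate can be dropped, since that would need a step of length of order $\rho$, exceeding the $\rho/2$ displacement budget; starting at least $\rho$ and perturbed by less than $\rho/2$, every support coordinate stays strictly positive. Hence the SSC output has support exactly $I_+^{(i)}$.

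Assembling the blocks finishes the proof. Under parallel selection the SSC is applied to every block, so the per-block identity yields $\supp(\x_{k+1})=\supp(\x_*)$; under GS or random selection only block $i(k)$ is updated while the remaining blocks are copied verbatim, giving the stated per-block conclusion $\supp(\x_{k+1}^{i(k)})=\supp(\x_*^{i(k)})$. The hard part will be the quantitative matching of step sizes in (i)--(ii): one must ensure that, under the rule $\alpha_j=\min(\alpha^{(j)}_{\max},\beta_j)$, a drop fires for \emph{every} non-support vertex before the SSC halts---so the small non-support coordinates reach exactly zero rather than merely shrinking---while simultaneously guaranteeing that the large support coordinates are protected by the displacement budget. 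Converting the strict-complementarity gap $c$ into these two simultaneous step-size comparisons, uniformly over $U$, is where the main effort lies.
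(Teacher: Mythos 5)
Your overall strategy is the same as the paper's: freeze the gradient during one SSC call, convert strict complementarity into a uniform multiplier gap so that (a) the FW vertex and all AFW directions at points with $\supp(\x) = \supp(\x_*)$ stay in the support subspace, and (b) at points carrying non-support mass the AFW picks an away step on a non-support coordinate with slope bounded away from zero, then show the drop steps fire as maximal steps strictly inside the trust region \eqref{eq:omegaaux} while the final step is a short non-maximal step that terminates the SSC. The paper packages this as an abstract criterion (Definition \ref{def:asrel}, Proposition \ref{p:cactid}) verified for the AFW on the simplex via \cite[Lemma 3.2]{bomze2020active} (Lemma \ref{l:AFWs}); your items (i)--(ii) reproduce exactly the content of that verification and of the case analysis in Proposition \ref{p:cactid}, just argued directly.

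However, there is one genuine gap: your ``displacement budget.'' You invoke \eqref{eq:dcondy} to claim $\n{\x_k^{(i)} - \y_j} \leq \sqrt{(2/L)\,(f(\x_k) - f(\y_j))} = o(1)$ as $\x_k \to \x_*$, ``since $\x_k$ is near the stationary point.'' This does not follow from the hypotheses of Theorem \ref{th:actid}: $\x_*$ is only assumed stationary with strict complementarity --- the assumption that $f(\x_*)$ is a minimum on the relevant component appears only in Theorem \ref{th:bcrandom} and is \emph{not} available here --- so there is no lower bound $f(\y_j) \geq f(\x_*) - o(1)$. Indeed the SSC iterates are only confined to the ball $\bar{B}_{\n{\g}/2L}(\x_k + \g/2L)$, whose radius is $\Theta(1)$ when $\nabla f(\x_*) \neq \oo$, so a priori $f(\x_k) - f(\y_j) = \Theta(1)$ (think of $\x_*$ a saddle of a non-convex quadratic). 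The argument is moreover circular: the budget is supposed to keep the iterates near $\x_*$, but its validity presupposes that $f$ cannot drop much, i.e.\ that the iterates stay near $\x_*$. Since you use this budget as the sole mechanism protecting the support coordinates from being dropped, that part of your proof fails as written. The repair is the structural bound the paper uses: each maximal (drop) step taken while non-support mass is present has length $\alpha_{\max}^{(j)} = o(1)$, and there are at most boundedly many consecutive maximal steps (each drop removes an active vertex), so the cumulative drift is $o(1)$ by direct summation as in \eqref{eq:Lnorm} --- this is established by induction along the chain, since the $\Theta(1)$ lower bound on $\beta_j$ needs $\y_j$ to be within $o(1)$ of $\x_k$; once the iterate lies in $\OM_*$, the next direction lies in $S_*$, its slope is $o(1)$ by stationarity, hence $\beta_j \leq 2\Sc{\g}{\hat{\d}_j}/L = o(1) < \alpha_{\max}^{(j)} = \Theta(1)$ (Lemma \ref{l:maxO1}), so the SSC terminates with a single short interior step that can neither zero out a support coordinate nor create a non-support one. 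No function-value budget is needed anywhere. With that replacement, your per-block analysis and the final assembly over parallel/GS/random selections match the paper's proof.
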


  When the sequence generated by our algorithm converges to the point $\x_*$, it is then easy to see that the support of the iterate matches the final support of $\x_*$ for $k$ large enough.
\begin{corollary} \label{c:actid}
			Under the above assumptions on $\OM$, let $\mathcal{A}^{(i)}$ be the AFW for $i \in [1 \! : \! m]$, and let strict complementarity conditions hold at $\x_* \in \OM$. If $\x_k \rightarrow \x_*$ (almost surely), then for parallel and GS selection (for random sampling) we have $\tx{supp}(\x_k) = \tx{supp}(\x_*)$ for $k$ large enough.
\end{corollary}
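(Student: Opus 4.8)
The plan is to feed the convergence hypothesis directly into the one-step, local identification guarantee of Theorem~\ref{th:actid}. First I would use $\x_k \to \x_*$ to secure a tail of iterations on which every iterate lies in the neighborhood $U$ produced by Theorem~\ref{th:actid}, and then upgrade the per-iteration (respectively per-block) identification statements of that theorem into the uniform conclusion $\tx{supp}(\x_k) = \tx{supp}(\x_*)$ for all large $k$. The only genuine content beyond Theorem~\ref{th:actid} is the propagation of a single-block identification to all blocks simultaneously.

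For the parallel strategy this is immediate. Since $\x_k \to \x_*$, there is an index $K$ with $\x_k \in U$ for all $k \geq K$. Applying the first bullet of Theorem~\ref{th:actid} at each such $k$ gives $\tx{supp}(\x_{k+1}) = \tx{supp}(\x_*)$, so after shifting the index the claim holds for every $k \geq K+1$.

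The GS and random cases require an additional argument, because at step $k$ only the single selected block $i(k)$ is identified. The main point I would establish is a monotonicity of the set of already-identified blocks along the tail $k \geq K$. On the one hand, a block selected while $\x_k \in U$ has its support corrected by the second bullet of Theorem~\ref{th:actid}, and a selected block that is already correct stays correct, since the theorem's conclusion is exactly $\tx{supp}(\x_{k+1}^{i(k)}) = \tx{supp}(\x_*^{i(k)})$ irrespective of the current support. On the other hand, a block not selected at step $k$ is frozen, $\x_{k+1}^{(i)} = \x_k^{(i)}$. The key observation handling the obstacle is that a block never selected on $[K,\infty)$ is constant there and therefore, by $\x_k^{(i)} \to \x_*^{(i)}$, must already \emph{equal} $\x_*^{(i)}$, hence already carries the correct support. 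Combining the two cases, for each block $i$ there is a finite time past which its support coincides with $\tx{supp}(\x_*^{(i)})$ and stays so; taking the maximum over the finitely many blocks yields a single threshold $K'$ with $\tx{supp}(\x_k) = \tx{supp}(\x_*)$ for all $k \geq K'$.

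Finally, for random sampling the convergence $\x_k \to \x_*$ holds only almost surely, so the deterministic reasoning above is to be carried out pathwise: on the full-measure event where $\{\x_k\}$ converges to $\x_*$ the same threshold argument applies verbatim, giving $\tx{supp}(\x_k) = \tx{supp}(\x_*)$ for large $k$ almost surely. I expect the only delicate point to be the frozen-block step for GS and random selection---namely arguing that an unselected block is not merely asymptotically but exactly at $\x_*^{(i)}$---while the rest is bookkeeping over the finitely many blocks.
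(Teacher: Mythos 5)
Your proposal is correct and follows exactly the route the paper intends: the paper states Corollary~\ref{c:actid} without an explicit proof, remarking only that it is ``easy to see'' from Theorem~\ref{th:actid} once $\x_k \rightarrow \x_*$, and your argument---tail of iterates inside $U$, one-step identification for selected blocks, and the observation that a block unselected on the whole tail is frozen and hence forced by convergence to equal $\x_*^{(i)}$ exactly---is precisely the bookkeeping that remark suppresses. The frozen-block step you flag as delicate is indeed the only nontrivial point for GS and random selection, and your pathwise treatment of the almost-sure case is sound since $U$ is deterministic.
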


This result has relevant practical implications, especially when handling sparse optimization problems. Since the algorithm iterates have a constant support when $k$ is large,  we 
can simply focus on the few support components and  forget about the others in this case. We hence can  exploit this by embedding sophisticated tools (like, e.g., caching strategies, second-order methods) in the algorithm, thus obtaining a significant speed up in the end. 

\section{Numerical results}\label{sec:numres}
We report here some preliminary numerical results for a non-convex quadratic optimization problem  referred to as Multi-StQP \cite{bomze2010multi} on a product of (here identical) simplices, that is
\begin{equation}\label{p:qsimplex}
		\min \left\{ \x^\top \Qb \x  : \x \in (\Delta^l)^m \right\}\, .
\end{equation}
The matrix $\Qb$ was generated in such a way that the solutions of problem \eqref{p:qsimplex} had components sparse but different from vertices. This is in fact the setting where FW variants have proved to be more effective \cite{bomze2022fast,rinaldi2020avoiding}. In order to obtain the desired property, we consider a perturbation of a stochastic StQP \cite{bomze2022two}. Given $\{\bar{\Qb}_i\}_{i \in [1 : m]}$ representing $m$ possible StQPs, with $\bar{\Qb}_i \in \rr^{l \times l}$ for $i \in [1 \! : \! m]$, the corresponding stochastic StQP with sample space $[1 \! : \! m]$ is given by
\begin{equation}\label{eq:pststqp}
	\max \left \{ \sum_{i = 1}^{m} p_i \y_i^\top \bar{\Qb}_i \y_i  : \y_i \in \Delta^l\quad \tx{for all } i \in [1 \! : \! m] \right \}\, .
\end{equation}
with $p_i$ probability of the StQP $i$. Equivalently, \eqref{eq:pststqp} is an instance of problem \eqref{p:qsimplex} with $\Qb = \bar{\Qb}$, for
\begin{equation}
	\bar{\Qb} = \begin{bmatrix} 
		-p_1\bar{\Qb}_1 & 0             & \cdots & 0            \\
		0            & -p_2\bar{\Qb}_2  & \cdots & 0            \\
		\vdots       & \vdots        & \ddots & \vdots       \\
		0            & 0             & \cdots & -p_m\bar{\Qb}_m
	\end{bmatrix} \, .
\end{equation}
 In our tests, we added to the stochastic StQP a perturbation coupling the blocks. More precisely, the matrix $\Qb$ was set equal to $\bar{\Qb} + \varepsilon \tilde{\Qb} $, for $\tilde{\Qb}$ a random matrix with standard Gaussian independent entries. The coefficient $\varepsilon$ was set equal to $\frac{1}{2m^2}$.  We set $\bar{\Qb}_i =\bar{\Ab}_i + \alpha \mathit{\Ib_l}$, for $\alpha = 0.5$ and $\bar{\Ab}_i$ the adjacency matrix of an Erd\H{o}s-R\'enyi random graph, where each couple of vertices has probability $p$ of being connected by an edge, independently from the other couples. Hence, for $i\in [1 \! : \! m]$ the problem
\begin{equation}
	\min \left\{- \y^\top\bar{\Qb}_i \y : \y \in \Delta^l \right\}
\end{equation}
is a regularized maximum-clique formulation, where each maximal clique corresponding to a unique strict local maximizer with support equal to its vertices, and conversely  (see \cite{bomze1999maximum} and references therein). The probability $p$ is set as follows
\begin{equation}
	p = \binom{l}{s}^{\frac{2}{s(s-1)}} \, ,
\end{equation}
for $s$ the nearest integer to $0.4l$, so that the expected number of cliques with size $ \approx 0.4l $ is 1, (see, e.g., \cite{alon2016probabilistic}). Notice that the perturbation term $\tilde{\Qb}$ ensures that problem \eqref{p:qsimplex} cannot be solved by optimizing each block separately. \\
We remark here that different ways to build large StQPs starting from smaller instances and preserving the structure of their solutions have been discussed in \cite{bomze2017complexity}. However, while the resulting problems decouple on the feasible set of the larger problem, they still decouple on the product of the feasible sets of the smaller instances, and for our purposes are equivalent to the block diagonal structure. 

 We tested four methods in total: AFW + SSC with parallel, GS and randomized updates (PAFW + SSC, GSAFW + SSC, BCAFW + SSC respectively), and FW with randomized updates (BCFW, coinciding with the block coordinate FW introduced in \cite{lacoste2012block}). Our tests focused on the local identification and on the convergence properties of our methods.

The code was written in {\tt Python} using the {\tt numpy} package, and the tests were performed on an Intel Core i9-12900KS CPU 3.40GHz, 32GB RAM. 
The codes relevant to the numerical tests are available at the following link: \\ \url{https://github.com/DamianoZeffiro/Projection-free-product-domain}.

\begin{figure}[h]
	\centering
		\begin{subfigure}[b]{0.25\textwidth}
		\includegraphics[width=\linewidth]{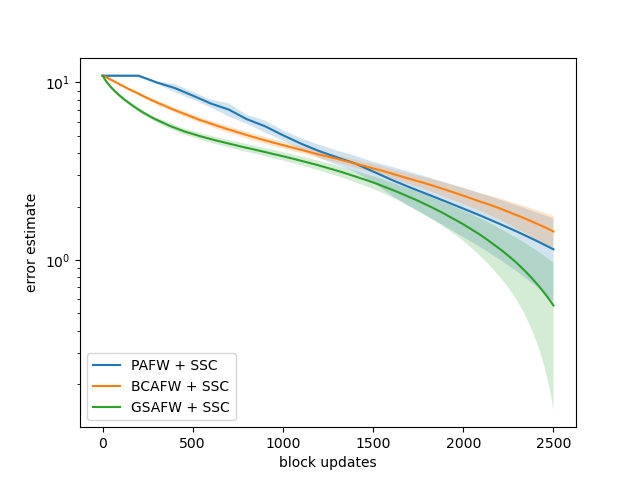}
	\end{subfigure}	
	\begin{subfigure}[b]{0.25\textwidth}
		\includegraphics[width=\linewidth]{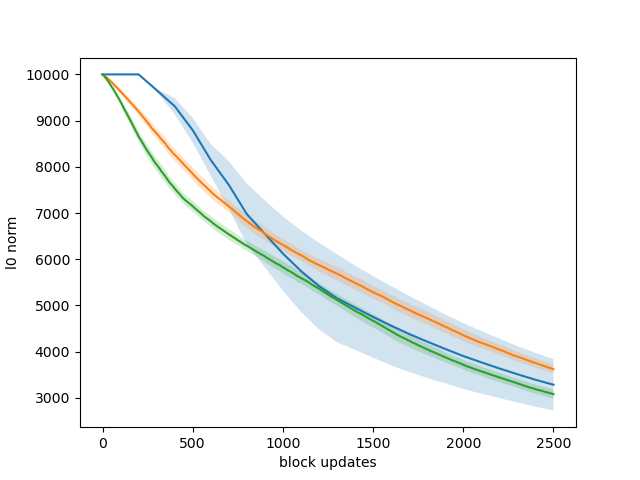}
	\end{subfigure}

	\begin{subfigure}[b]{0.25\textwidth}
		\includegraphics[width=\linewidth]{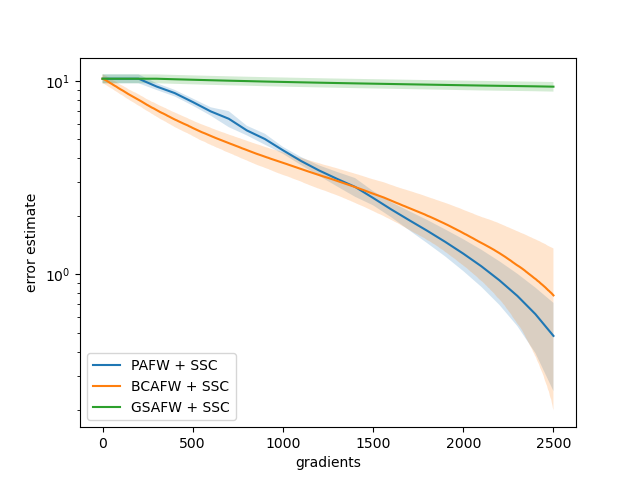}
	\end{subfigure}	
	\begin{subfigure}[b]{0.25\textwidth}
		\includegraphics[width=\linewidth]{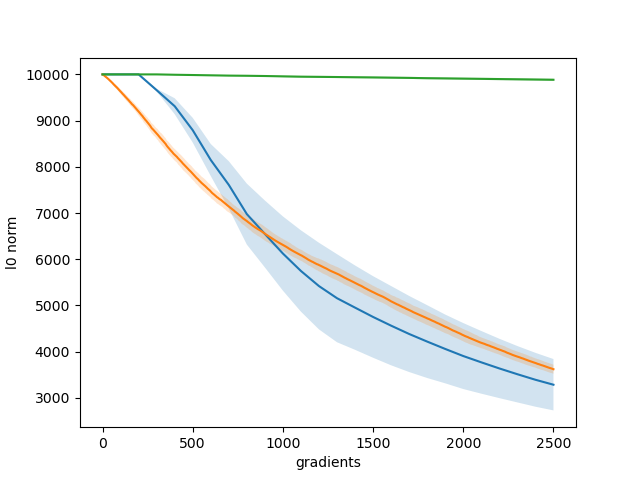}
	\end{subfigure}	
	\caption{Comparison using multistart between GSAFW + SSC, PAFW + SSC and BCAFW + SSC. $l = m = 100$.}
	\label{fig:t1snofw}
\end{figure}

 \subsection{Multistart}\label{s:multistart}
 We first considered a multistart approach, where the results are averaged across 20 runs, choosing 4 starting points for each of 5 random initializations of the objective.
 
 We measure both optimality gap  (error estimate) and sparsity (number of nonzero components, $\ell_0$ norm) of the iterates, reporting average and standard deviation in the plots. The  estimated global optimum used in the optimality gap is obtained by subtracting $10^{-5}$ from the best local solution found by the algorithms. We mostly consider the performance with respect to block gradient computations, with one gradient counted each time the SSC is performed in one of the blocks, as in previous works (see, e.g., \cite{lacoste2012block}). In some tests involving the GSAFW + SSC, we consider instead block updates, with one block update counted each time the algorithms modifies the current iterate in one of the blocks. \damiano{It is important to highlight that, 
 since at each block update the gradient is constant and only one linear minimization is required at the beginning of the SSC, the number of gradient computations for our algorithms also coincides with the number of linear minimizations on the blocks for the FW variants we consider.}
 \\ 
 We first compare PAFW  + SSC, BCAFW + SSC and GSAFW + SSC (Figure \ref{fig:t1snofw}). As expected, while GSAFW + SSC shows good performance with respect to block updates, it has a very poor performance with respect to block gradient computations, since at every iteration $m$ gradients must be computed to update a single block. We then compared PAFW + SSC, BCAFW + SSC and BCFW. The results (Figure \ref{fig:t1s}) clearly show that PAFW + SSC and BCAFW + SSC outperform BCFW. All these findings are consistent with the theoretical results described in Section \ref{s:asid}.

\begin{figure}[h]
	\centering
	\begin{subfigure}[b]{0.25\textwidth}
		\includegraphics[width=\linewidth]{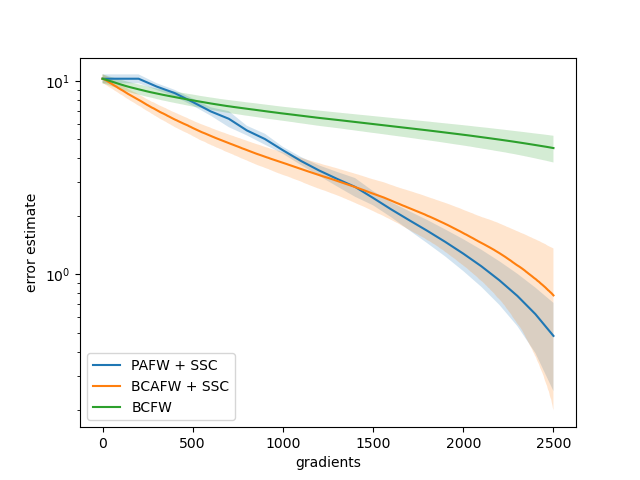}
	\end{subfigure}	
\begin{subfigure}[b]{0.25\textwidth}
	\includegraphics[width=\linewidth]{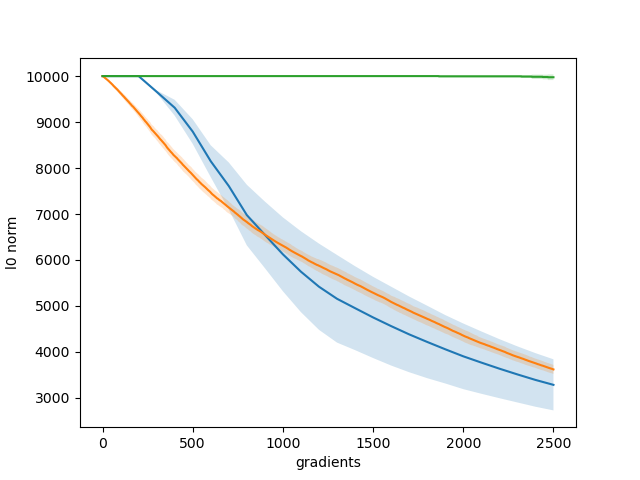}
\end{subfigure}	

	\begin{subfigure}[b]{0.25\textwidth}
	\includegraphics[width=\linewidth]{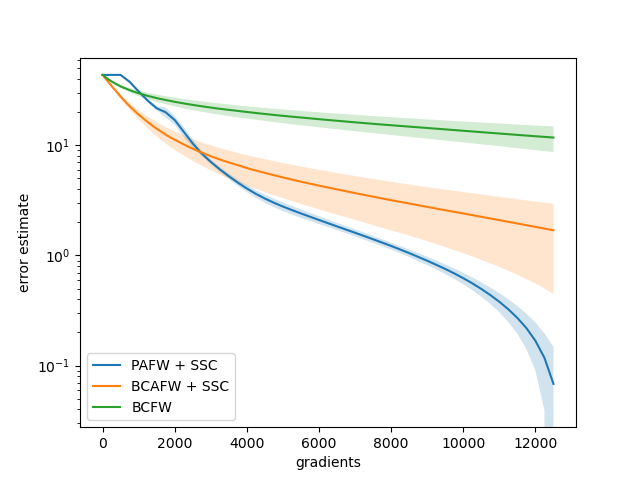}
\end{subfigure}	
\begin{subfigure}[b]{0.25\textwidth}
	\includegraphics[width=\linewidth]{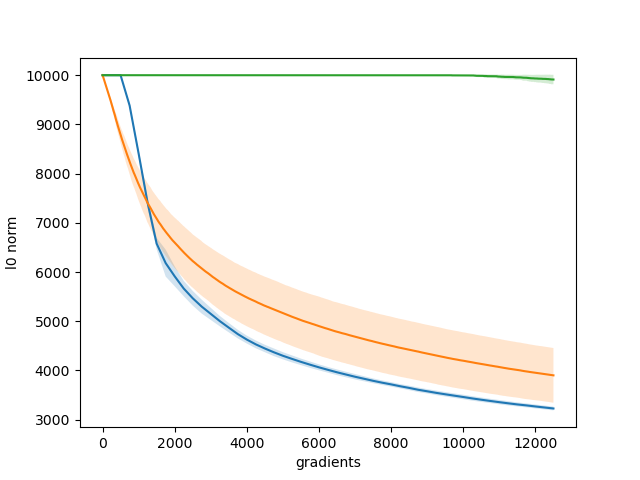}
\end{subfigure}	

	\begin{subfigure}[b]{0.25\textwidth}
	\includegraphics[width=\linewidth]{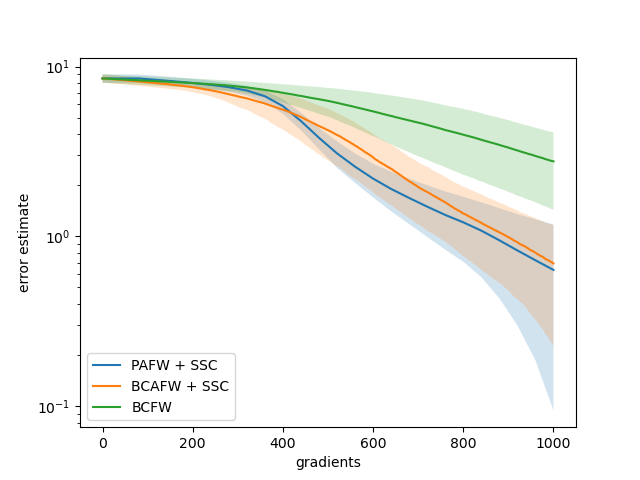}
\end{subfigure}	
\begin{subfigure}[b]{0.25\textwidth}
	\includegraphics[width=\linewidth]{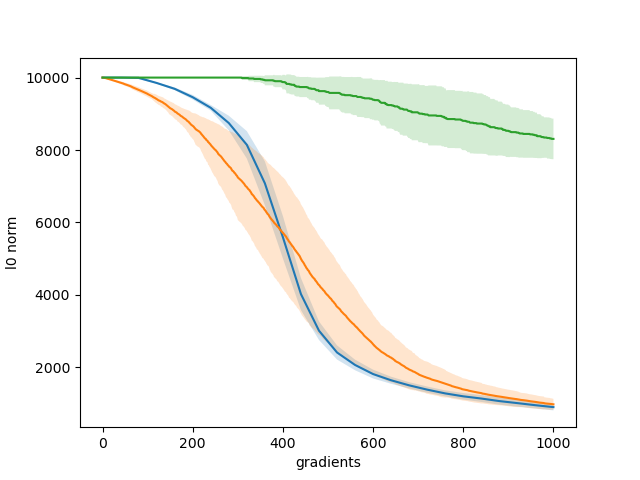}
\end{subfigure}	
	\caption{Comparison using multistart between BCFW, PAFW + SSC and BCAFW + SSC. $l = m = 100$ in the first row, $l=40$ and $m = 250$ in the second row, $l=250$ and $m=40$ in the third row.}
	\label{fig:t1s}
\end{figure}

\subsection{Monotonic basin hopping}
We then consider the monotonic basin hopping approach (see, e.g., \cite{leary2000global,locatelli2013global}) described in Algorithm~\ref{algo:7}. The method computes a local optimizer $\x_{*, i}$ close to the current iterate $\bar{\x}_i$ (Step 2). There $\mathcal{M}$ is a local optimization algorithm, and given as input $\mathcal{M}$ and $\bar{\x}_i$, the subroutine LO  returns the result of applying $\mathcal{M}$ starting from $\bar{\x}_i$, with a suitable stopping criterion which in our case is given by a limit on the number of gradient computations, set to $10m$. The sequence of best points found in the first $i$ iterations  $\{\bar{\x}_{*, i}\}$ is updated in Step 3, and in Step 5, $\bar{\x}_{i + 1}$ is chosen in a neighborhood of $\bar{\x}_{*, i}$. The neighborhood $B(\x, \gamma)$ for $\x \in \OM$ and $\gamma \in (0, 1]$ is given by 
\begin{equation}
B(\x, \gamma) = \{\x + \gamma(\y - \x) : \y \in \OM  \} \, .
\end{equation}
In the tests, we chose $\y$ uniformly at random in $\OM$ and set $\bar{\x}_{i + 1} = \bar{\x}_i + \gamma(\y - \bar{\x}_i)$, with $\gamma = 0.25$. 
\begin{algorithm}[h]
	\caption{Monotonic Basin Hopping Strategy}
	\begin{algorithmic}[1]
		\State $\bar{\x}_0 \in \OM$, $i_{\max} \in \mathbb{N}$, $\gamma \in [0, 1]$,  $i = 0$. Set $\bar{\x}_{*, - 1} = \bar{\x}_0$    
		\State Compute a local optimizer $\x_{*, i}=\tx{LO}(\mathcal{M}, \bar{\x}_i)$ 
            \State \textbf{If} $f(\x_{*, i}) < f(\bar{\x}_{*, i - 1})$ \textbf{then} set $\bar{\x}_{*, i} = \x_{*, i}$, \textbf{else} set $\bar{\x}_{*, i} = \bar{\x}_{*, i - 1}$.
       \State \textbf{If} $i = i_{\max}$, \textbf{then} STOP.      
		\State Randomly chose $\bar{\x}_{i + 1}$ in $B(\bar{\x}_{*, i}, \gamma)$.
            \State Set $i = i + 1$. Go to step 2.
	\end{algorithmic}
 \label{algo:7}
\end{algorithm} 
The methods we consider as subroutines in Step~2 are PAFW + SSC, BCAFW + SSC and BCFW. We set $i_{\max} = 9$, and perform 10 runs of Algorithm \ref{algo:7}, randomly initializing  the starting point. We plot once again average and standard deviation for $\{f(\bar{\x}_{*, i}) - \tilde{f}^*\}$ with $\tilde{f}^*$ estimating the global optimum (obtained by subtracting $10^{-1}$ from the best solution found by the methods). \\
The results again show that PAFW + SSC and BCAFW + SSC find better solutions than BCFW, with BCAFW + SSC outperforming PAFW + SSC in most instances if $l \le m$. 
\begin{figure}[h]
	\centering
	\begin{subfigure}[b]{0.25\textwidth}
		\includegraphics[width=\linewidth]{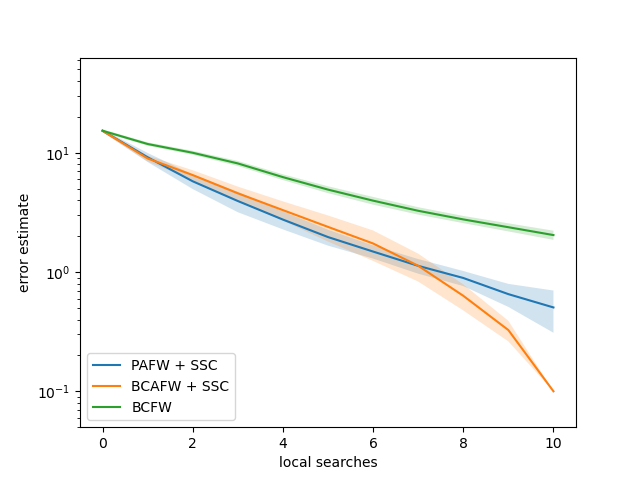}
	\end{subfigure}	
	\begin{subfigure}[b]{0.25\textwidth}
		\includegraphics[width=\linewidth]{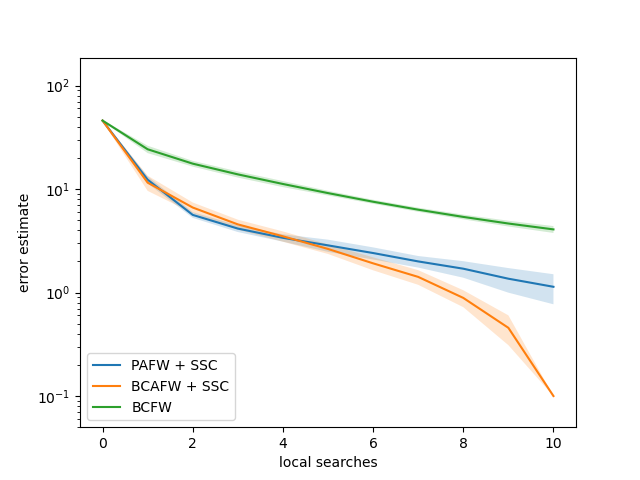}
	\end{subfigure}	
	\begin{subfigure}[b]{0.25\textwidth}
		\includegraphics[width=\linewidth]{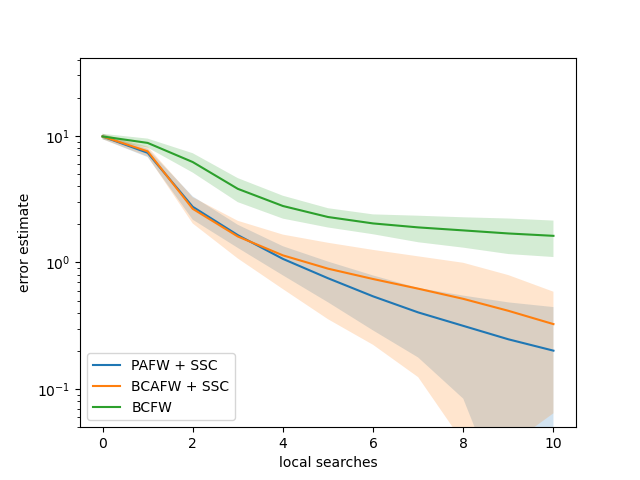}
	\end{subfigure}	
	\caption{Comparison using Monotonic Basin Hopping with BCFW, PAFW + SSC and BCAFW + SSC. From left to right: $l = m = 100$; $l=40$ and $m = 250$; $l=250$ and $m=40$.}
	\label{fig:t2s}
\end{figure}

\section{Conclusions}\label{sec:conc}

For a quite general optimization problem on product domains, we offer a seemingly new convergence theory, which ensures both convergence of objective values and (local) linear convergence of the iterates under widely accepted conditions, for block-coordinate FW variants. Convergence is global for $\mu$-strongly convex objectives, but we mainly focus on the non-convex case. In case of randomized selection of the blocks, all results are in expectation, and need a further technical assumption. As usual, constants and rates are specified in terms of the Lipschitz constant $L$ for the gradient map, the constant $\mu$ used in the local Kurdyka-\L ojasiewicz-condition,  and the parameter $\tau$ in the so-called angle condition.

The results are complemented by an active set identification result for a specific structure of the 
product domain and suitable choices of a projection-free strategy (FW-approach with away steps for the search direction): it is proved that our framework identifies the support of a solution in a finite number of iterations.

To the best of our knowledge, this is the first time that both a linear convergence
rate and an active set identification result are given for  (bad step-free) block-coordinate FW variants, in an effort to narrow the research gap observed in~\cite{osokin2016minding}.

In our preliminary experiments, numerical evidence clearly points out the advantages of our strategy to exploit structural knowledge. On randomly generated non-convex Multi-StQPs where easy instances were carefully avoided, our approach (AFW with parallel or randomized updates, both combined with the Short Step Chain strategy SSC) is dominating the block-coordinate FW method with randomized updates. 


We tested resilience of our reported observations by employing two experimental setups, pure multistart and monotonic basin hopping. The same effects seem to prevail.

Instance construction was motivated by a stochastic variant of the StQP, varying both domain dimension $l$ and number $m$ of possible scenarios. In case $l\le m$
there seems to be a slight edge towards the combination of AFW with randomized updates and SSC, compared to the parallel variant. This effect does not seem to happen with large $l$ in comparison to $m$, but would not change superiority over traditional block-coordinate FW methods.

\section{Appendix}
	\newcommand{\bSSC}{\overline{\textnormal{SSC}}}

	\subsection{Proofs}

	In the rest of this section, we always assume that the SSC terminates in a finite number of steps and that the angle condition holds. \\
	\damiano{The first lemma is related to the single block setting, and strengthens some of the properties proved for the SSC in \cite[Proposition 4.1]{rinaldi2020avoiding}}.
	\begin{lemma} \label{SSC:property}
		For a fixed $i \in [1 \! : \! m]$, let $\{\w_k\} = \{\x_k^{(i)}\}$, and let $\w_{k + 1} = \SSC(\w_k, \g)$. Then there exists $\tilde{\w}_k \in \{\y_j\}_{j=0}^T$ such that
		\begin{equation} \label{eq:Tcone}
			\n{ \w_{k +  1}-\w_k} \geq  \frac{\tau}{L} \n{\pi(T_{\OM_{(i)}}(\tilde{\w}_k), \g)}
		\end{equation}	      
		and
		\begin{align}
			& \n{\tilde{\w}_k - \w_k} \leq \n{\w_{k + 1} - \w_k} \label{eq:nmn} \, , \\
			& \Sc{\g}{\tilde{\w}_k - \w_k} \leq \Sc{\g}{\w_{k + 1} - \w_k} \label{eq:gwkwk}	 \, .
		\end{align}
		Furthermore, we have
		\begin{equation} \label{eq:normscg}
			L \n{\y - \w_k}^2 \leq \Sc{\g}{\y - \w_k} 
		\end{equation}
		for $y \in \{\w_{k + 1}, \tilde{\w}_k\}$.
	\end{lemma}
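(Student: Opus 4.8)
The plan is to read the four inequalities off the geometry of the SSC chain $\y_0 = \w_k, \y_1, \dots, \y_T = \w_{k+1}$ produced by $\SSC(\w_k, \g)$, where $\d_j \in \cA(\y_j, \g)$ and $\alpha_j = \min(\alpha_{\max}^{(j)}, \beta_j)$. Two of the claims come essentially for free. First I would record that the sufficient-decrease ball $\mathcal{B} := B_{\n{\g}/2L}(\w_k + \frac{\g}{2L})$ in \eqref{eq:omegaaux} does not depend on $j$, that $\y_0 = \w_k$ lies on $\partial\mathcal{B}$, and that every later iterate lies in $\Omega_j \subseteq \mathcal{B}$. Since membership $\z \in \mathcal{B}$ is equivalent, after expanding the defining inequality of the ball, to $L\n{\z - \w_k}^2 \le \Sc{\g}{\z - \w_k}$, this yields \eqref{eq:normscg} for every chain point, in particular for $\w_{k+1}$ and for whatever $\tilde{\w}_k$ we select. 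Second, each $\d_j$ is a nonzero feasible ascent direction for $\Sc{\g}{\cdot}$ (otherwise the chain would already have returned at Step 2--4), so $\Sc{\g}{\d_j} > 0$ and $\alpha_j > 0$ make the map $j \mapsto \Sc{\g}{\y_j - \w_k}$ strictly increasing; its maximum over the chain is attained at $j = T$, which is exactly \eqref{eq:gwkwk} for any choice $\tilde{\w}_k \in \{\y_j\}_{j=0}^T$.

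The substance lies in the choice of $\tilde{\w}_k$ together with \eqref{eq:Tcone} and \eqref{eq:nmn}. I would take $\tilde{\w}_k = \y_{T-1}$, the last point at which a direction was computed; the degenerate stationary case (a zero direction is returned at $\y_T$, so $\n{\pi(T_{\OM_{(i)}}(\y_T), \g)} = 0$) is handled by $\tilde{\w}_k = \w_{k+1} = \y_T$, for which all four relations hold trivially. In the non-degenerate case the chain stopped with $\alpha_{T-1} = \beta_{T-1}$, so $\w_{k+1} = \y_T$ lies on the boundary of the trust region $\Omega_{T-1}$. The operative mechanism is the second, slope-dependent ball $B_{\Sc{\g}{\hat{\d}_{T-1}}/L}(\w_k)$ in \eqref{eq:omegaaux}: its radius is exactly $\Sc{\g}{\hat{\d}_{T-1}}/L$, and the block angle condition \eqref{eq:ang_cond} gives $\Sc{\g}{\hat{\d}_{T-1}} \ge \tau\, \n{\pi(T_{\OM_{(i)}}(\y_{T-1}), \g)}$. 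When this ball is the binding constraint, $\n{\w_{k+1} - \w_k}$ equals its radius, which delivers \eqref{eq:Tcone}; and the membership of $\y_{T-1}$ in the same ball gives $\n{\y_{T-1} - \w_k} \le \Sc{\g}{\hat{\d}_{T-1}}/L = \n{\w_{k+1} - \w_k}$, which is \eqref{eq:nmn}.

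The main obstacle is precisely that the trust region \eqref{eq:omegaaux} is an intersection of two balls, so I must control which one is active at the terminal step. I would split into the case where the slope ball binds (handled as above) and the case where the sufficient-decrease ball $\mathcal{B}$ binds, and argue that the displacement still dominates $\frac{\tau}{L}\n{\pi(T_{\OM_{(i)}}(\tilde{\w}_k), \g)}$. The delicate point is that on $\partial\mathcal{B}$ one only has $L\n{\w_{k+1}-\w_k}^2 = \Sc{\g}{\w_{k+1}-\w_k}$, which controls the \emph{chord} slope $\Sc{\g}{\widehat{\w_{k+1}-\w_k}}$ rather than the selected-direction slope, so the reduction to the angle condition must be made with care (comparing, along $\d_{T-1}$, the exit times from the two balls). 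Establishing $\y_{T-1} \in \Omega_{T-1}$, equivalently $\n{\y_{T-1}-\w_k} \le \Sc{\g}{\hat{\d}_{T-1}}/L$, from the first-ball inequality $L\n{\y_{T-1}-\w_k}^2 \le \Sc{\g}{\y_{T-1}-\w_k}$ and the selection rule defining $\cA$, is the technical heart that simultaneously secures \eqref{eq:nmn} and closes the slope comparison; the rest is bookkeeping with the monotonicity and first-ball containment already noted.
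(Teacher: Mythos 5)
Your ``free'' claims are handled correctly and match the paper: \eqref{eq:normscg} holds for every chain point because the whole chain stays in $\bar{B} = \bar{B}_{\n{\g}/2L}(\w_k + \frac{\g}{2L})$, whose membership inequality is exactly $L\n{\z - \w_k}^2 \le \Sc{\g}{\z - \w_k}$, and \eqref{eq:gwkwk} follows from monotonicity of $j \mapsto \Sc{\g}{\y_j}$. Your treatment of the stationary exit and of the exit through $\partial B_{T-1}$ also coincides with the paper's Cases 1--3 (including the degenerate subcase $\alpha_{T-1}=0$, which you gloss over but where your choice still works since then $\y_T = \y_{T-1} \notin \tx{int}(B_{T-1})$ and the needed inequality $\s{\g}{\hat{\d}_{T-1}} \le L\n{\y_T - \w_k}$ holds directly). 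The genuine gap is the remaining case, termination on $\partial\bar{B}$: there your fixed choice $\tilde{\w}_k = \y_{T-1}$ cannot be made to work. On $\partial\bar{B}$ the only available identity is $L\n{\w_{k+1}-\w_k}^2 = \Sc{\g}{\w_{k+1}-\w_k}$, i.e.\ $L\n{\w_{k+1}-\w_k}$ equals the \emph{chord} slope; since $\w_{k+1}-\w_k = \sum_j \alpha_j \d_j$, the chord slope only dominates $\min\{\s{\g}{\hat{\d}_j} : j \in [0\!:\!T-1]\}$, not the last-step slope $\s{\g}{\hat{\d}_{T-1}}$. If the early steps are shallow and truncated at their maximal feasible stepsizes while the final step is steep and exits through $\partial\bar{B}$, then $\s{\g}{\hat{\d}_{T-1}}$ can strictly exceed $L\n{\w_{k+1}-\w_k}$, and the angle condition at $\y_{T-1}$ (which only brackets $\s{\g}{\hat{\d}_{T-1}}$ between $\tau\,\pi_{\y_{T-1}}(\g)$ and $\pi_{\y_{T-1}}(\g)$) then gives no route to \eqref{eq:Tcone}; the inequality can in fact fail at $\y_{T-1}$. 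Your proposed repair --- establishing $\y_{T-1} \in \Omega_{T-1}$ and comparing exit times along $\d_{T-1}$ --- does not close this: in the relevant subcase $\beta_{T-1} = \alpha_{T-1} > 0$ the membership $\y_{T-1} \in \Omega_{T-1} = B_{T-1} \cap \bar{B}$ is automatic (otherwise $\beta_{T-1}$ would vanish), and it yields $L\n{\y_{T-1}-\w_k} \le \s{\g}{\hat{\d}_{T-1}}$, an inequality pointing the wrong way when $\bar{B}$ rather than $B_{T-1}$ is the binding ball.

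The missing idea, which is the paper's Case 4, is that $\tilde{\w}_k$ must be allowed to be an \emph{intermediate} iterate chosen by minimal directional slope: take $\tilde{T} \in \argmin\{\s{\g}{\hat{\d}_j} : j \in [0\!:\!T-1]\}$ and $\tilde{\w}_k = \y_{\tilde{T}}$. The triangle inequality applied to $\n{\sum_j \alpha_j \d_j}$ gives $\s{\g}{\w_{k+1}-\w_k}/\n{\w_{k+1}-\w_k} \ge \s{\g}{\hat{\d}_{\tilde{T}}}$, so the boundary identity yields $\tau\,\pi_{\y_{\tilde{T}}}(\g) \le \s{\g}{\hat{\d}_{\tilde{T}}} \le L\n{\w_{k+1}-\w_k}$, which is \eqref{eq:Tcone}; and \eqref{eq:nmn} follows because every $\y_j$ lies in $\bar{B}$, whence $L\n{\y_j-\w_k}^2 \le \s{\g}{\y_j-\w_k} \le \s{\g}{\w_{k+1}-\w_k} = L\n{\w_{k+1}-\w_k}^2$ by monotonicity. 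With this replacement of your terminal-point choice in the $\partial\bar{B}$ case, the remainder of your outline matches the paper's proof.
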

	\begin{proof}
		Let $\BB = \BB_{\frac{\n{\g}}{2L}}(\bar{\x} + \frac{\g}{2L})$ and let $T$ be such that $\w_{k +  1} = \y_T$. By \cite[(4.4)]{rinaldi2020avoiding} we have that \eqref{eq:normscg} holds for every $\z \in \BB$ (in place of $\y$), and therefore as desired for every 
  $$\y \in \{\w_{k + 1}, \tilde{\w}_k\} \subset \{\y_j : j\in [0 \! : \! T]\} \subset \BB\, .$$
		Let now $\tilde{p}_j = \n{\pi(T_{\OM_{(i)}}(\y_j), \g)}$. Notice that, if $\tilde{\w}_k = \y_l$, then
		\begin{equation}
			\frac{\tau}{L}\,	\n{\pi(T_{\OM_{(i)}}(\tilde{\w}_k), \g)} = \frac{\tau}{L}\, \tilde{p}_{l} \leq \frac{1}{L}\s{\g}{\hat{\d}_{l}} \, ,
		\end{equation}
		where the inequality follows from $\frac{\s{\g}{\hat{\d}_l}}{\tilde{p}_l} \geq \DSB_{\mathcal{A}}(\OM_{(i)}, \y_{l}, \g) \geq \SB_{\mathcal{A}}(\OM_{(i)}) = \tau$. Thus for proving \eqref{eq:Tcone}, in the rest of the proof it will be enough to prove
		\begin{equation} \label{eq:principle}
			\s{\g}{\hat{\d}_{l}} \leq L\n{\w_{k + 1}-\w_k} \, .
		\end{equation}  
		Furthermore, since by definition of the SSC, the scalar product $\Sc{\g}{\y_j}$ is increasing in $j$, we have
		$$\Sc{\g}{\tilde{\w}_k - \w_k} = \Sc{\g}{\y_l - \w_k} \leq \Sc{\g}{\y_T - \w_k} = \Sc{\g}{\w_{k + 1} - \w_k} \, .$$
		We distinguish four cases, according to how the SSC terminates. In the first two, we show we can choose the last step, $\tilde \w = \y_T$; in the third, the penultimate choice $\tilde \w = \y_{T-1}$ satisfies all conditions, and in the fourth case, an intermediate step is an appropriate choice. We abbreviate  $B_j = \BB_{\s{\g}{\hat{\d}_j}/L}(\w_k)$.\\
		\textbf{Case 1:} $T = 0$ or $\d_T = \oo$. Since there are no descent directions, $\w_{k +  1} = \y_T$ must be stationary for the gradient $-\g$.  Equivalently, $\tilde{p}_T = \n{\pi(T_{\OM_{(i)}}(\w_{k +  1}), \g)} = 0$. Finally, it is clear that if $T = 0$ then $\d_0 =\oo$, since $\y_0$ must be stationary for $-\g$. Thus taking $\tilde{\w}_k = \y_T$ the desired properties follow.  \\
		Before examining the remaining cases we remark that if the SSC terminates in Phase II, then $\alpha_{T- 1} = \beta_{T-1}$ must be maximal w.r.t. the conditions $\y_T \in B_{T-1}$ or $\y_T \in \bar{B}$. If $\alpha_{T-1} = 0$ then $\y_{T-1} = \y_T$, and in this case we cannot have $\y_{T-1} \in \partial \bar{B}$, otherwise the SSC would terminate in Phase II of the previous cycle. Therefore necessarily $\y_T = \y_{T-1} \in \tx{int}(B_{T-1})^c$ (Case 2). If $\beta_{T - 1} = \alpha_{T- 1} > 0$ we must have $\y_{T-1}\in \OM_{T-1} = B_{T-1} \cap \bar{B}$, and $\y_T \in \partial B_{T - 1}$ (Case 3) or $\y_T \in \partial \bar{B}$ (Case 4) respectively.  \\
		\textbf{Case 2:} $\y_{T-1} = \y_T \in \tx{int}(B_{T-1})^c$. We can rewrite the condition as
		\begin{equation} \label{trueT}
			\s{\g}{\hat{\d}_{T-1}} \leq L\n{\y_{T-1} - \w_k} = L \n{\y_T - \w_k} \, ,
		\end{equation}
		which is exactly \eqref{eq:principle}.
		Then $\tilde{\w}_k = \w_{k + 1} = \y_T$ satisfies the desired conditions. \\
		\textbf{Case 3:} $\y_T = \y_{T - 1} + \beta_{T - 1} \d_{T-1}$ and $\y_T \in \partial B_{T-1}$. Then from $\y_{T-1} \in B_{T-1}$ it follows
		\begin{equation} \label{c3y}
			L \n{\y_{T-1} - \w_k} \leq \s{\g}{\hat{\d}_{T-1}} \, ,
		\end{equation}
		and $\y_T \in \partial B_{T-1}$ implies
		\begin{equation}\label{t1T}
			\s{\g}{\hat{\d}_{T-1}} = L \n{\y_T - \w_k} \, ,
		\end{equation}
		which is \eqref{eq:principle} for $l = T - 1$.
		Combining \eqref{c3y} with \eqref{t1T} we also obtain
		\begin{equation} \label{yt1}
			L \n{\y_{T - 1} - \w_k} \leq L \n{\y_T - \w_k} \, ,
		\end{equation}
		so that in particular we can take $\tilde{\w}_k = \y_{T-1}$. \\
		\textbf{Case 4:} $\y_T = \y_{T - 1} + \beta_{T - 1} \d_{T-1}$ and $\y_T \in \partial \bar{B}$. \\
		The condition $\w_{k +  1} = \y_T \in \partial \bar{B}$ can be rewritten as
		\begin{equation} \label{case2c}
			L\n{\w_{k +  1} - \w_k}^2 - \s{\g}{\w_{k +  1} - \w_k} = 0 \, .
		\end{equation}
		For every $j \in [0 \! : \! T]$ we have
		\begin{equation} \label{eq:rec}
			\w_{k +  1} = \y_j + \sum_{i=j}^{T-1} \alpha_i \d_i \, .
		\end{equation}
		We now want to prove that for every $j \in [0 \! : \! T]$
		\begin{equation} \label{eq:claim2}
			\n{ \w_{k +  1} - \w_k} \geq \n{\y_j - \w_k} \, .
		\end{equation}
		Indeed, we have
		\begin{equation*}
			\begin{aligned}
				L\n{ \w_{k +  1} - \w_k}^2 = \s{\g}{\w_{k +  1} - \w_k} &= \s{\g}{\y_j - \w_k} + \sum_{i=j}^{T-1} \alpha_i \s{\g}{\d_i} \\ &\geq \s{\g}{\y_j - \w_k} \geq L\n{\y_j - \w_k}^2 \, ,
			\end{aligned}
		\end{equation*}
		where we used \eqref{case2c} in the first equality, \eqref{eq:rec} in the second, $\s{g}{\d_j} \geq 0$ for every $j$ in the first inequality and $\y_j \in \bar{B}$ in the second equality, which proves~\eqref{eq:claim2}. \\
		We also have
		\begin{equation} \label{eq:fracineq}
				\frac{\s{\g}{\w_{k +  1} - \w_k}}{\n{\w_{k +  1} - \w_k}}  = \frac{\s{\g}{\sum_{j=0}^{T-1}\alpha_j \d_j}}{\n{\sum_{j=0}^{T-1}\alpha_j \d_j}}  \geq
				\frac{\s{\g}{\sum_{j=0}^{T-1}\alpha_j \d_j}}{\sum_{j=0}^{T-1}\alpha_j \n{\d_j}} 
				\geq \min \left\{\frac{\s{\g}{\d_j}}{\n{\d_j}} :  j \in [0 \! : \! T-1] \right\} \, .
		\end{equation}
		Thus for $\tilde{T} \in \argmin \left\{ \frac{\s{\g}{\d_j}}{\n{\d_j}} : j \in [0 \! : \! T-1]\right\}$ we have
		\begin{equation} \label{tildeT}
			\s{g}{\hat{\d}_{\tilde{T}}} \leq 	\frac{\s{\g}{\w_{k +  1} - \w_k}}{\n{\w_{k +  1} - \w_k}} = L\n{\w_{k +  1} - \w_k} \, ,
		\end{equation}
		where we used \eqref{eq:fracineq} in the first inequality and \eqref{case2c} in the second (equality). \\
		In particular $\tilde{\w}_k = \y_{\tilde{T}}$ satisfies the desired properties, where $\n{\tilde{\w}_k - \w_k} \leq \n{\w_{k + 1} - \w_k}$ by \eqref{eq:claim2} and \eqref{eq:principle} holds by \eqref{tildeT}. \qed
	\end{proof}
	
	We denote by $\bSSC(\w_k, \g)$ a point $\tilde{\w}_k$ with the properties stated in the above lemma. It is also useful to define $U_0$ as the connected component of $\{ \x\in \OM : f(\x) \leq f(\x_0)\}$ containing $\x_0$.  \damiano{The next result shows how in our block coordinate setting the assumption of Theorem \ref{th:bcrandom} on $U_0$ allows us to retrieve a lower bound on the objective for points generated by the SSC. This lower bound is analogous to the lower bound required in \cite[Theorem 4.2]{rinaldi2020avoiding}.}
	\begin{lemma} \label{l:preliminary}
Let $\{\x_k\}$ be a sequence generated by Algorithm \ref{algo6}. Let $\bar{\x}_{k} = [\bSSC(\x_k^{(i)}, -\nabla f(\x_k)^{(i)})]_{i = 1}^m$. Assume also that $f(\x_*)$ is a minimum in $U_0$. Then, $\{f(\x_k)\}$ is decreasing, and for every $k$, $\{\x_k, \bar{\x}_k\} \subset U_0$, with $f(\y) \in [f(\x_*), f(\x_0)]$ for $\y \in  \{\x_k, \bar{\x}_k\}$.  
	\end{lemma}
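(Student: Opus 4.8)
The plan is to derive everything from the single per-block estimate \eqref{eq:normscg} of Lemma~\ref{SSC:property} together with the $L$-smoothness of $f$, and then to exploit the product structure of $\OM$ to glue the blockwise estimates into one global inequality. Write $\g_k^{(i)} = -\nabla f(\x_k)^{(i)}$ and let $\z$ stand for either $\x_{k+1}$ or $\bar{\x}_k$. For $\z = \x_{k+1}$ the blocks $i \notin \cM_k$ contribute $\z^{(i)} - \x_k^{(i)} = \oo$, while on the selected blocks $\z^{(i)}$ is the SSC output; for $\z = \bar{\x}_k$ every block $\z^{(i)} = \bSSC(\x_k^{(i)}, \g_k^{(i)})$. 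In all cases \eqref{eq:normscg} gives $L\n{\z^{(i)} - \x_k^{(i)}}^2 \leq \Sc{\g_k^{(i)}}{\z^{(i)} - \x_k^{(i)}}$, equivalently $\Sc{\nabla f(\x_k)^{(i)}}{\z^{(i)} - \x_k^{(i)}} \leq -L\n{\z^{(i)} - \x_k^{(i)}}^2$. Summing over $i \in [1 \! : \! m]$ and using that both the inner product and the squared norm are additive over blocks yields the key inequality $\Sc{\nabla f(\x_k)}{\z - \x_k} \leq -L\n{\z - \x_k}^2$, valid uniformly for all three block selection strategies.

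Next I would promote this endpoint inequality to a statement about the whole segment joining $\x_k$ to $\z$. Setting $\x_k(t) = \x_k + t(\z - \x_k)$ for $t \in [0,1]$, feasibility $\x_k(t) \in \OM$ is immediate from the blockwise convexity of $\OM$, and the descent lemma for the $L$-Lipschitz gradient gives
\begin{equation*}
f(\x_k(t)) \leq f(\x_k) + t\,\Sc{\nabla f(\x_k)}{\z - \x_k} + \frac{L t^2}{2}\n{\z - \x_k}^2 \leq f(\x_k) - L\Big(t - \tfrac{t^2}{2}\Big)\n{\z - \x_k}^2 .
\end{equation*}
Since $t - t^2/2 \geq 0$ on $[0,1]$, the entire segment stays in the sublevel set at level $f(\x_k)$; evaluating at $t = 1$ gives $f(\x_{k+1}) \leq f(\x_k)$ (the claimed monotone decrease) and $f(\bar{\x}_k) \leq f(\x_k)$.

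The containment in $U_0$ then follows by induction on $k$, with base case $\x_0 \in U_0$. Assuming $\x_k \in U_0$, so in particular $f(\x_k) \leq f(\x_0)$, the two curves $t \mapsto \x_k + t(\x_{k+1} - \x_k)$ and $t \mapsto \x_k + t(\bar{\x}_k - \x_k)$ are continuous paths inside $\OM \cap \{f \leq f(\x_0)\}$ emanating from $\x_k \in U_0$; since $U_0$ is a connected component of that sublevel set, both paths lie entirely in $U_0$, whence $\x_{k+1}, \bar{\x}_k \in U_0$. Finally the two-sided bound is read off directly: the upper bound $f(\y) \leq f(\x_0)$ for $\y \in \{\x_k, \bar{\x}_k\}$ is membership in the sublevel set, while the lower bound $f(\y) \geq f(\x_*)$ uses $\y \in U_0$ together with the hypothesis that $f(\x_*)$ is the minimum of $f$ on $U_0$.

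I expect the connectedness step to be the main obstacle: the sublevel set $\{f \leq f(\x_0)\}$ may split into several components, and for the parallel strategy all blocks move simultaneously, so one cannot argue one coordinate at a time. The point I would stress is that the single straight-line segment from $\x_k$ to the updated point already certifies both feasibility and the sublevel bound, thanks to the block-separable summation of \eqref{eq:normscg}; this is exactly what keeps the iterates trapped in $U_0$ and what makes the hypothesis "$f(\x_*)$ is a minimum in $U_0$" usable to supply the lower bound.
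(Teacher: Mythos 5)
Your proof is correct and takes essentially the same route as the paper's: the per-block inequality \eqref{eq:normscg} that you sum is exactly membership of the updated blocks in the trust-region balls $\bar{B}_{\n{\g^{(i)}}/2L}(\x_k^{(i)} + \g^{(i)}/2L)$, which the paper exploits by proving the sufficient decrease \eqref{eq:decp} for every point of the (convex, hence connected) product ball $\bar{B}^{\OM}_k$ containing $\x_k$, $\x_{k+1}$ and $\bar{\x}_k$, whereas you establish the same decrease only along the straight segments from $\x_k$ via the descent lemma; either connected set does the job. The remaining steps --- component maximality inside the sublevel set, induction on $k$, monotonicity from $t=1$, and the lower bound from minimality of $f(\x_*)$ on $U_0$ --- coincide with the paper's argument.
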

\begin{proof}
	Let $U_k$ be the minimal connected component of $\{ \x\in \OM : f(\x) \leq f(\x_k)]$ containing $\x_k$, let $\g = - \nabla f(\x_k)$ and let $\bar{B}^{\OM}_k = \OM \cap \prod_i \bar{B}_{\frac{\n{\g^{(i)}}}{2L}}(\x_k^{(i)} + \frac{\g^{(i)}}{2L})$. For $\y \in \bar{B}^\OM_k$, we have $\x_k\in \bar{B}^{\OM}_k$ and
	\begin{equation}\label{eq:decp}
		\begin{aligned}
				& f(\y) \leq f(\x_k) + \Sc{\nabla f(\x_k)}{\y - \x_k} + \frac{L}{2}\n{\y - \x_k}^2  \\ 
				& = f(\x_k) + \sum_{i=1}^{m} \Sc{\nabla f(\x_k)^{(i)}}{\y^{(i)} - \x_k^{(i)}} + \frac{L}{2}\n{\y^{(i)} - \x_k^{(i)}}^2 \leq f(\x_k) - \frac{L}{2}\sum_{i=1}^{m}\n{\y^{(i)} - \x_k^{(i)}}^2 \\ 
				& =	f(\x_k) - \frac{L}{2}\n{\x_k - \y}^2
		\end{aligned}
	\end{equation}
where we used the standard Descent Lemma in the first inequality and the the second follows by definition of $\bar{B}^{\OM}_k$. From \eqref{eq:decp} it follows that $\{f(\x_k)\}$ is decreasing, and that $\bar{B}^{\OM}_k \subset \{ \x\in \OM : f(\x) \leq f(\x_k)\}$.  Furthermore, since $\bar{B}^{\OM}_k$ is connected and contains $\x_k$, the stronger inclusion $\bar{B}^{\OM}_k \subset U_k$ is also true. Thus $\{\x_{k + 1}, \bar{\x}_k\} \subset \bar{B}^{\OM}_k \subset U_k$, so that in particular $U_{k + 1} \subset U_k$ since $f(\x_{k + 1}) \leq f(\x_k)$, and by induction we can conclude $\{\x_{k + 1}, \bar{\x}_k\} \subset U_0$. Finally, $f(\y) \in [f(\x_*), f(\x_k)]$ for $\y \in \{\x_{k + 1}, \bar{\x}_k\}$, where the lower bound follows from the assumption that $f(\x_*)$ is a minimum in $U_0$, and the upper bound follows from \eqref{eq:decp}. \qed
\end{proof}
 \damiano{In the following lemma, the properties of the SSC proved in Lemma \ref{SSC:property} for single blocks are combined to obtain analogous properties on the whole product of blocks, and the KL condition is then used to lower bound suitable improvement measures with an optimality gap for the objective. We would like to highlight that, unlike the single block case, this optimality gap is measured with respect to an auxiliary point which is not necessarily among those generated by the algorithm. Proof of the linear convergence rate  hence requires proper handling in this case.}
		\begin{lemma} \label{l:intermediate}
		Let $\{\x_k\}$ be a sequence generated by Algorithm \ref{algo6}, and assume that the angle condition holds for the method $\cA^{(i)}$ with the same $\tau $, for all $i\in [1 \! : \! m]$. Let $\bar{\x}_{k} = [\bSSC(\x_k^{(i)}, -\nabla f(\x_k)^{(i)})]_{i = 1}^m$ and $\tilde{\x}_{k + 1} = [\SSC(\x_k^{(i)}, -\nabla f(\x_k)^{(i)})]_{i = 1}^m$. If \eqref{hp:klP} holds at $\bar{\x}_k$, we then have, abbreviating $\g = -\nabla f(\x_k)$:
		\begin{align}
			&     \n{\tilde{\x}_{k + 1} - \x_k}^2 \geq \frac{\tau^2}{2(1 + \tau^2)L^2}\n{\pi(T_{\OM}(\bar{\x}_k), -\nabla f(\bar{\x}_k))}^2\,  \,\geq \frac{\tau^2 \mu}{L^2(1 + \tau^2)}[f(\bar{\x}_k) -  f^*] \label{eq:fbound2.0}\, , \\
			&      \frac{1}{2}\Sc{\g}{\tilde{\x}_{k + 1} - \x_k} \geq \frac{1}{3}[f(\x_k) - f(\bar{\x}_k)]  \label{eq:fbound2.1} \, .
		\end{align}
	\end{lemma}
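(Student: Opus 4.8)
The plan is to derive both inequalities by applying Lemma~\ref{SSC:property} block by block and then summing over $i \in [1\!:\!m]$, exploiting the product structure of $\OM$. Throughout I would write $\g = -\nabla f(\x_k)$, so $\g^{(i)} = -\nabla f(\x_k)^{(i)}$, and identify $\tilde\x_{k+1}^{(i)} = \SSC(\x_k^{(i)}, \g^{(i)})$ with $\w_{k+1}$ and $\bar\x_k^{(i)} = \bSSC(\x_k^{(i)}, \g^{(i)})$ with $\tilde\w_k$ in the notation of that lemma. The key structural fact I would use repeatedly is that, since $\OM = \prod_i \OM_{(i)}$, the tangent space at a product point factorizes, $T_\OM(\bar\x_k) = \prod_i T_{\OM_{(i)}}(\bar\x_k^{(i)})$, so that $\n{\pi(T_\OM(\bar\x_k),\vv)}^2 = \sum_i \n{\pi(T_{\OM_{(i)}}(\bar\x_k^{(i)}),\vv^{(i)})}^2$ for any $\vv$; likewise squared norms and inner products split as sums over blocks.

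For the first inequality in \eqref{eq:fbound2.0}, I would sum the cone bound \eqref{eq:Tcone} in squared form over the blocks to obtain $\n{\tilde\x_{k+1}-\x_k}^2 \ge \frac{\tau^2}{L^2}\n{\pi(T_\OM(\bar\x_k),\g)}^2$, a lower bound in terms of the projection of the \emph{frozen} gradient $\g = -\nabla f(\x_k)$. The main obstacle is that the target involves $-\nabla f(\bar\x_k)$, evaluated at the shifted point $\bar\x_k$. I would bridge this using nonexpansiveness of the projection onto the (product) tangent subspace together with $L$-Lipschitz continuity of $\nabla f$: setting $A = \n{\pi(T_\OM(\bar\x_k),\g)}$ and $B = \n{\pi(T_\OM(\bar\x_k),-\nabla f(\bar\x_k))}$, one gets $B \le A + \n{\nabla f(\bar\x_k)-\nabla f(\x_k)} \le A + L\n{\bar\x_k - \x_k}$. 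Then \eqref{eq:nmn} summed over blocks gives $\n{\bar\x_k-\x_k}\le \n{\tilde\x_{k+1}-\x_k}$, while the bound just derived reads $A \le \frac{L}{\tau}\n{\tilde\x_{k+1}-\x_k}$. Applying $(a+b)^2\le 2a^2+2b^2$ yields $B^2 \le \frac{2L^2(1+\tau^2)}{\tau^2}\n{\tilde\x_{k+1}-\x_k}^2$, which inverts to exactly the stated constant $\frac{\tau^2}{2(1+\tau^2)L^2}$. The second inequality in \eqref{eq:fbound2.0} is then immediate: it is the KL condition \eqref{hp:klP} applied at $\bar\x_k$, which holds by hypothesis, since $B = \pi_{\bar\x_k}(-\nabla f(\bar\x_k)) \ge \ck\, [f(\bar\x_k)-f^*]^{1/2}$ and $\ck = \sqrt{2\mu}$ give $B^2 \ge 2\mu[f(\bar\x_k)-f^*]$.

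For \eqref{eq:fbound2.1}, I would first use the scalar-product monotonicity \eqref{eq:gwkwk}, summed over blocks, to pass from $\tilde\x_{k+1}$ to $\bar\x_k$, namely $\Sc{\g}{\tilde\x_{k+1}-\x_k} \ge \Sc{\g}{\bar\x_k - \x_k}$; it then suffices to bound $f(\x_k)-f(\bar\x_k)$ above by $\tfrac32\Sc{\g}{\bar\x_k-\x_k}$. Here I would combine two ingredients: the descent-lemma upper estimate $f(\x_k)-f(\bar\x_k) \le \Sc{\g}{\bar\x_k-\x_k} + \frac{L}{2}\n{\bar\x_k-\x_k}^2$ (plain $L$-smoothness with $\g = -\nabla f(\x_k)$), and \eqref{eq:normscg} with $\y = \bar\x_k^{(i)}$ summed over blocks, which gives $\frac{L}{2}\n{\bar\x_k-\x_k}^2 \le \frac12\Sc{\g}{\bar\x_k-\x_k}$. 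Substituting the second into the first produces the factor $\tfrac32$, and the chain $\frac12\Sc{\g}{\tilde\x_{k+1}-\x_k}\ge \frac12\Sc{\g}{\bar\x_k-\x_k}\ge \frac13[f(\x_k)-f(\bar\x_k)]$ closes the argument.

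The only genuinely delicate point I anticipate is the gradient-shift step in \eqref{eq:fbound2.0}; everything else is routine block-wise summation of estimates already packaged in Lemma~\ref{SSC:property}, combined with the descent lemma. In particular I would verify that the constant $2(1+\tau^2)$ emerges cleanly from the elementary inequality $(a+b)^2 \le 2(a^2+b^2)$, so that no side assumption such as $\tau \le 1$ is required.
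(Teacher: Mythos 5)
Your proposal is correct and follows essentially the same route as the paper's proof: both rest on Lemma~\ref{SSC:property} via \eqref{eq:Tcone}, \eqref{eq:nmn}, \eqref{eq:gwkwk} and \eqref{eq:normscg}, the nonexpansiveness-plus-Lipschitz bridge from the frozen gradient $\g$ to $-\nabla f(\bar\x_k)$ with $(a+b)^2 \le 2a^2+2b^2$ producing the constant $\frac{\tau^2}{2(1+\tau^2)L^2}$, the KL property at $\bar\x_k$, and the descent lemma combined with \eqref{eq:normscg} yielding the factor $\tfrac32$ in \eqref{eq:fbound2.1}. The only cosmetic difference is that you aggregate over blocks before applying the triangle-type estimate, whereas the paper argues blockwise (its quantities $\bar q_{(i)}$, $q_{(i)}$) and sums at the end; the constants and logic are identical.
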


\begin{proof}
	Let $\bar{\g} = -\nabla f(\bar{\x}_k)$, $\bar{q}_{(i)} = \n{\pi(T_{\OM_{(i)}}(\bx_k^{(i)}), \bar{\g}^{(i)})}$, and $q_{(i)} = \n{\pi(T_{\OM_{(i)}}(\bar{\x}_k^{(i)}), \g^{(i)})}$.
	Observe that by the Lipschitz continuity of the gradient, we have the inequality
	\begin{equation}\label{eq:lipcon}
		\bar{q}_{(i)} \leq q_{(i)} + L\n{\bx_{k}^{(i)} - \x_k^{(i)}}
	\end{equation}
	and thus 
	\begin{equation}\label{eq:lipcon2}
		\bar{q}_{(i)}^2 \leq  2 q_{(i)}^2 + 2L^2\n{\bx_{k}^{(i)} - \x_k^{(i)}}^2 \leq \frac{2L^2(1 + \tau^2)}{\tau^2}\n{\tilde{\x}_{k + 1}^{(i)} - \x_k^{(i)}}^2 \, ,
	\end{equation}
	where we applied Jensen's inequality to \eqref{eq:lipcon} in the first inequality, and \eqref{eq:Tcone} together with \eqref{eq:nmn} in the second inequality. \\
	Thus we can write
	\begin{equation}
		\begin{aligned}
	\n{ \tilde{\x}_{k + 1}-\x_k }^2 		 
	&= \sum_{i=1}^m \n{ \tilde{\x}_{k + 1}^{(i)}-\x_k^{(i)}}^2  \geq \frac{\tau^2}{2L^2(1 + \tau^2)} \sum_{i=1}^{m}  \bar{q}_{(i)}^2 \\
			& = \frac{\tau^2}{2L^2(1 + \tau^2)} \n{\pi(T_{\OM}(\bar{\x}_k), -\nabla f(\bar{\x}_k))}^2 \,  \,\geq \frac{\tau^2 \mu}{L^2(1 + \tau^2)}[f(\bar{\x}_k) -  f^*] \, ,
		\end{aligned}
	\end{equation}
	where we used \eqref{eq:lipcon2} in the first inequality and the KL property in the second.	This proves \eqref{eq:fbound2.0}.\\ 
	Using the standard Descent Lemma, we can give the upper bound
	\begin{equation} \label{eq:fxkfbk}
		f(\x_k) - f(\bx_k) \leq \Sc{\g}{\bx_k - \x_k} + {\textstyle\frac{L}{2}}\,\n{\bx_k - \x_k}^2 \leq {\textstyle\frac{3}{2}}\,\Sc{\g}{\bx_k - \x_k} = {\textstyle\frac{3}{2}} \sum_{i=1}^m \Sc{\g^{(i)}}{\bx_k^{(i)} - \x_k^{(i)}} \, ,
	\end{equation}
	where we used \eqref{eq:normscg} in the second inequality.  We can finally prove \eqref{eq:fbound2.1}:
	\begin{equation}
 {\textstyle\frac{1}{2}} \Sc{\g}{\tilde{\x}_{k + 1} - \x_k} \geq {\textstyle\frac{1}{2}} \Sc{\g}{\bar{\x}_k - \x_k} = 	{\textstyle\frac{1}{2}} \sum_{i=1}^m \Sc{\g^{(i)}}{\bar{\x}_k^{(i)} - \x_k^{(i)}} 
 \geq {\textstyle\frac{1}{3}} [f(\x_k) - f(\bar{\x}_k)] \, ,
	\end{equation}
	where we used~\eqref{eq:gwkwk} in the first inequality and \eqref{eq:fxkfbk} in the second one. \qed
\end{proof}
	
\damiano{The next result,  which directly follows from the previous lemma, explicitly lower bounds the improvement on the objective with the optimality gap introduced above.}
	\begin{lemma} \label{l:Epineq}
		Let $\{\x_k\}$ be a sequence generated by Algorithm \ref{algo6}, and assume that the angle condition holds for the method $\cA^{(i)}$ with the same $\tau $, for all $i\in [1 \! : \! m]$. Let $\bar{\x}_{k} = (\bSSC(\x_k^{(i)}, -\nabla f(\x_k)^{(i)}))_{i = 1}^m$. Then, if the KL property \eqref{hp:klP} holds at $\bar{\x}_k$,
	for parallel updates
	\begin{equation}\label{parmal}
f(\x_k)-		f(\x_{k + 1})  \geq \frac{\tau^2 \mu}{2L(1 + \tau^2)}\, (f(\bar{\x}_k) - f^*) \, ,
	\end{equation}
for GS updates
\begin{equation}
f(\x_k)-		f(\x_{k + 1})  \geq \frac{\tau^2 \mu}{2L(1 + \tau^2)}\,\frac{1}{m}\, (f(\bar{\x}_k) - f^*) \, ,
\end{equation}
and for random updates
\begin{equation}
	\mE[f(\x_k)-		f(\x_{k + 1}) ] \geq \frac{\tau^2 \mu}{2L(1 + \tau^2)}\, \frac{1}{m}\,\mE[f(\bar{\x}_k) - f^*] \, . \label{eq:fbound2}
\end{equation}
	\end{lemma}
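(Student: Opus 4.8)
The plan is to derive one single-step decrease estimate valid for all three selection rules and then specialize it. Write $\g = -\nabla f(\x_k)$ and $\tilde{\x}_{k+1}^{(i)} = \SSC(\x_k^{(i)}, \g^{(i)})$, so that $\x_{k+1}^{(i)} = \tilde{\x}_{k+1}^{(i)}$ for $i \in \cM_k$ and $\x_{k+1}^{(i)} = \x_k^{(i)}$ otherwise. First I would apply the Descent Lemma on the updated blocks only,
\[
f(\x_{k+1}) \leq f(\x_k) - \sum_{i \in \cM_k}\Sc{\g^{(i)}}{\tilde{\x}_{k+1}^{(i)} - \x_k^{(i)}} + \tfrac{L}{2}\sum_{i \in \cM_k}\n{\tilde{\x}_{k+1}^{(i)} - \x_k^{(i)}}^2 \, ,
\]
and bound each quadratic term by $\tfrac12\Sc{\g^{(i)}}{\tilde{\x}_{k+1}^{(i)} - \x_k^{(i)}}$ using property \eqref{eq:normscg} of the SSC. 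This leaves the scalar-product form of the sufficient decrease,
\[
f(\x_k) - f(\x_{k+1}) \geq \tfrac{1}{2}\sum_{i \in \cM_k} \Sc{\g^{(i)}}{\tilde{\x}_{k+1}^{(i)} - \x_k^{(i)}} \, ,
\]
which is the single-block counterpart of the estimate \eqref{eq:decp} established in Lemma \ref{l:preliminary}.

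For the parallel rule (Definition \ref{d:parallel}) we have $\cM_k = [1\!:\!m]$, so the right-hand side equals $\tfrac12\Sc{\g}{\tilde{\x}_{k+1} - \x_k}$. Summing \eqref{eq:normscg} over all blocks yields $\Sc{\g}{\tilde{\x}_{k+1} - \x_k} \geq L\n{\tilde{\x}_{k+1} - \x_k}^2$, and combining this with the KL consequence \eqref{eq:fbound2.0} of Lemma \ref{l:intermediate} produces exactly $\frac{\tau^2\mu}{2L(1+\tau^2)}[f(\bar{\x}_k) - f^*]$, i.e.\ \eqref{parmal}.

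For the GS rule (Definition \ref{d:GS}) only the block $i(k)$ is updated, and its defining maximality gives $\Sc{\g^{(i(k))}}{\tilde{\x}_{k+1}^{(i(k))} - \x_k^{(i(k))}} \geq \frac{1}{m}\sum_{i=1}^m \Sc{\g^{(i)}}{\tilde{\x}_{k+1}^{(i)} - \x_k^{(i)}} = \frac{1}{m}\Sc{\g}{\tilde{\x}_{k+1} - \x_k}$. The same norm conversion and \eqref{eq:fbound2.0} then reproduce the bound with the extra factor $1/m$. For random sampling (Definition \ref{d:random}) I would condition on the history $\mathcal{F}_k$ up to iteration $k$, under which $\x_k$, $\bar{\x}_k$ and all the $\tilde{\x}_{k+1}^{(i)}$ are determined and $i(k)$ is uniform on $[1\!:\!m]$; the conditional expectation then replaces the GS maximum by the average $\frac{1}{m}\Sc{\g}{\tilde{\x}_{k+1} - \x_k}$, and since $f(\bar{\x}_k) - f^*$ is $\mathcal{F}_k$-measurable the tower property gives \eqref{eq:fbound2}.

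The one genuinely delicate point is that the GS rule maximizes the scalar products $\Sc{\g^{(i)}}{\tilde{\x}_{k+1}^{(i)} - \x_k^{(i)}}$, not the squared displacements $\n{\tilde{\x}_{k+1}^{(i)} - \x_k^{(i)}}^2$. I must therefore keep the decrease estimate in scalar-product form throughout and only convert back to the full displacement $\n{\tilde{\x}_{k+1} - \x_k}^2$ after averaging or maximizing over the blocks; the cruder bound $f(\x_k) - f(\x_{k+1}) \geq \tfrac{L}{2}\n{\tilde{\x}_{k+1}^{(i(k))} - \x_k^{(i(k))}}^2$ does not see the GS choice and would fail to yield the $1/m$ factor. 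Everything else is routine bookkeeping, reusing \eqref{eq:normscg} from Lemma \ref{SSC:property} and the KL estimate \eqref{eq:fbound2.0} from Lemma \ref{l:intermediate}.
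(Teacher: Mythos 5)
Your proof is correct and follows essentially the same route as the paper's: block-wise Descent Lemma plus the SSC property \eqref{eq:normscg}, the max-versus-average bound for the GS rule, averaging over the uniform block choice for random sampling, and the KL-based estimate \eqref{eq:fbound2.0} from Lemma \ref{l:intermediate}. The only cosmetic difference is that the paper passes through the squared displacement $\frac{L}{2}\n{\x_{k+1}-\x_k}^2$ (via \eqref{eq:decp}) in the parallel and random cases while you keep everything in scalar-product form until the end; your remark that the GS case forces the scalar-product form is exactly how the paper argues in \eqref{eq:GSgx}.
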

\begin{proof}
	We first prove the inequality for parallel updates. We have
	\begin{equation}\label{eq:parma}
		\begin{aligned}
	&		f(\x_k) - f(\x_{k + 1}) \geq \frac{L}{2} \,\n{\x_k - \x_{k + 1}}^2 \geq \frac{L}{2}\,\frac{\tau^2 }{2L^2(1 + \tau^2)}\,\n{\pi(T_{\OM}(\bar{\x}_k), -\nabla f(\bar{\x}_k))}^2 \\ 
	&	= \frac{\tau^2 }{4L(1 + \tau^2)}\n{\pi(T_{\OM}(\bar{\x}_k), -\nabla f(\bar{\x}_k))}^2 \,  \,\geq \frac{\tau^2 \mu}{2L^2(1 + \tau^2)}\,[f(\bar{\x}_k) -  f^*] \, , 
		\end{aligned}
	\end{equation}
where the first inequality follows from \eqref{eq:decp}, the second inequality
by \eqref{eq:fbound2.0} where with the notation introduced in Lemma \ref{l:intermediate} we have by definition $\x_{k + 1} = \tilde{\x}_{k + 1}$. For GS updates, we have
	\begin{equation}\label{eq:GSgx}
\begin{aligned}
	& f(\x_k) - f(\x_{k + 1}) \geq \Sc{g}{\x_{k + 1} - \x_k} - \frac{L}{2} \n{\x_{k + 1} - \x_k}^2 \geq \frac{1}{2} \Sc{\g}{\x_{k + 1} - \x_k} \\
	& = \frac{1}{2} \max_{i \in [1 \! : \! m]} \Sc{\g^{(i)}}{\tilde{\x}_{k + 1}^{(i)} - \x_k} \geq \frac{1}{2m} \Sc{\g}{\tilde{\x}_{k + 1} - \x_k} \geq \frac{L}{2m} \n{\tilde{\x}_{k + 1} - \x_k}^2 \\ 
	& \geq  \frac{\tau^2 \mu}{2mL(1 + \tau^2)}\,[f(\bar{\x}_k) - f^*] \,   ,
\end{aligned}
	\end{equation}
where in the first inequality we used the standard Descent Lemma, \eqref{eq:normscg} in the second inequality; the equality follows by definition of GS updates, in the fourth inequality we applied again \eqref{eq:normscg}, and \eqref{eq:fbound2.0} in the last one. \\
Finally, for random updates we have, denoting as ${i(k) = j}$ the event that the index chosen at the step $k$ is $j$:

\begin{equation}
	\begin{aligned}
		& \mE[f(\x_k) - f(\x_{k + 1})] \geq \frac{L}{2} \mE[\n{\x_{k + 1} - \x_k}^2] =  \frac{L}{2} \sum_{j = 1}^m \Prob({i(k) = j}) \mE[\n{\tilde{\x}_{k + 1}^{(j)} - \x_k^{(j)}}^2]  \\
		 & =  \frac{L}{2m} \sum_{j = 1}^m \mE[\n{\tilde{\x}_{k + 1}^{(j)} - \x_k^{(j)}}^2] = 
		 \frac{L}{2m}\mE[\n{\tilde{\x}_{k + 1} - \x_k}^2] \geq \frac{\tau^2 \mu}{2L(1 + \tau^2)}\mE[f(\bar{\x}_k) -  f^*] \, ,
	\end{aligned}
\end{equation}
where the first inequality follows from \eqref{eq:decp}, we used $\Prob(\{i(k) = j \}) = \frac 1m$ in the second equality and \eqref{eq:fbound2.0} in the last inequality. \qed
\end{proof}

 \damiano{In the next two lemmas, we relate the improvement measured with respect to the auxiliary point to the true improvement of the objective, and thus manage to extend the linear convergence rate in \cite[Lemma 4.3]{rinaldi2020avoiding} to the block coordinate setting.}
	\begin{lemma} \label{l:Epineq2}
	Let $\{\x_k\}$ be a sequence generated by Algorithm \ref{algo6}, and assume that the angle condition holds for the method $\cA^{(i)}$ with the same $\tau $, for all $i\in [1 \! : \! m]$. Let $\bar{\x}_{k} = (\bSSC(\x_k^{(i)}, -\nabla f(\x_k)^{(i)}))_{i = 1}^m$. Then
	for parallel updates
	\begin{equation}
	f(\x_k) - f(\x_{k + 1}) \geq \frac{1}{3}[f(\x_k) - f(\bar{\x}_k)] \label{eq:fboundP}\, ,
	\end{equation}
	for GS updates
	\begin{equation}
			f(\x_k) - f(\x_{k + 1}) \geq \frac{1}{3m}[f(\x_k) - f(\bar{\x}_k)] \label{eq:fboundGS} \, ,
	\end{equation}
	and for random updates
	\begin{equation}
		\mathbb{E}[f(\x_k) - f(\x_{k + 1})] \geq \frac{1}{3m}\mE[f(\x_k) - f(\bar{\x}_k)] \label{eq:fboundR}
	\end{equation}
\end{lemma}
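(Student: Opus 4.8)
The plan is to mirror the structure of the proof of Lemma~\ref{l:Epineq}, but to replace every appeal to the KL property (which produced the lower bound in terms of $[f(\bar{\x}_k) - f^*]$) by the KL-free inequality \eqref{eq:fbound2.1} of Lemma~\ref{l:intermediate}, which directly lower bounds $\tfrac12\Sc{\g}{\tilde{\x}_{k+1} - \x_k}$ by $\tfrac13[f(\x_k) - f(\bar{\x}_k)]$, with $\g = -\nabla f(\x_k)$. The key preliminary observation is that although \eqref{eq:fbound2.1} is stated inside Lemma~\ref{l:intermediate} under the KL hypothesis, its proof there uses only \eqref{eq:gwkwk} and \eqref{eq:fxkfbk}; hence it remains valid here, where Assumption~\ref{ass:KL} is dropped. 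I would state this at the outset so that the three bounds follow from a single chain applied to the appropriate set of updated blocks.

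For the \emph{parallel} case, all blocks are updated, so $\x_{k+1} = \tilde{\x}_{k+1}$. I would start from the Descent Lemma in the inner-product form $f(\x_k) - f(\x_{k+1}) \geq \Sc{\g}{\x_{k+1} - \x_k} - \tfrac{L}{2}\n{\x_{k+1} - \x_k}^2$, then apply \eqref{eq:normscg} in each block and sum, so that $\tfrac{L}{2}\n{\tilde{\x}_{k+1} - \x_k}^2 \leq \tfrac12\Sc{\g}{\tilde{\x}_{k+1} - \x_k}$ absorbs half of the inner product; this yields $f(\x_k) - f(\x_{k+1}) \geq \tfrac12\Sc{\g}{\tilde{\x}_{k+1} - \x_k}$, exactly as in the first two steps of \eqref{eq:GSgx}. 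Invoking \eqref{eq:fbound2.1} then gives \eqref{eq:fboundP}.

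For the \emph{GS} case only the block $i(k)$ changes, so $\x_{k+1} - \x_k$ is supported on $i(k)$ and both the inner product and the squared norm in the Descent Lemma collapse to that block; applying \eqref{eq:normscg} to block $i(k)$ gives $f(\x_k) - f(\x_{k+1}) \geq \tfrac12\Sc{\g^{(i(k))}}{\tilde{\x}_{k+1}^{(i(k))} - \x_k^{(i(k))}}$. By the definition of the GS index this equals $\tfrac12\max_{i}\Sc{\g^{(i)}}{\tilde{\x}_{k+1}^{(i)} - \x_k^{(i)}}$, and since every summand is nonnegative by \eqref{eq:normscg}, bounding the maximum below by the average over the $m$ blocks produces $\tfrac{1}{2m}\Sc{\g}{\tilde{\x}_{k+1} - \x_k}$; one more use of \eqref{eq:fbound2.1} delivers the factor $\tfrac{1}{3m}$ in \eqref{eq:fboundGS}. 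The \emph{random} case follows the same single-block computation, after which I take the conditional expectation over the uniform choice of $i(k)$: since each block is selected with probability $1/m$ and $\tilde{\x}_{k+1}$, $\bar{\x}_k$ and $f(\x_k)$ are determined by $\x_k$, the conditional expectation of $\tfrac12\Sc{\g^{(i(k))}}{\tilde{\x}_{k+1}^{(i(k))} - \x_k^{(i(k))}}$ is $\tfrac{1}{2m}\Sc{\g}{\tilde{\x}_{k+1} - \x_k}$, so \eqref{eq:fbound2.1} and the tower property give \eqref{eq:fboundR}.

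The main difficulty is really bookkeeping rather than a deep obstacle: one must be careful that for GS and random selection $\x_{k+1} - \x_k$ lives only in block $i(k)$, so that the Descent Lemma is applied blockwise and the per-block form of \eqref{eq:normscg} may be used, and one must correctly pass from the maximizing block (GS) or the sampled block (random) back to the full sum $\Sc{\g}{\tilde{\x}_{k+1} - \x_k}$ over all blocks at the cost of a $1/m$ factor. The only genuinely subtle point is confirming, as noted above, that \eqref{eq:fbound2.1} is available without Assumption~\ref{ass:KL}.
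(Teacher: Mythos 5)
Your proof is correct and follows essentially the same route as the paper's: the Descent Lemma combined with \eqref{eq:normscg} to obtain $f(\x_k)-f(\x_{k+1}) \geq \tfrac12\Sc{\g}{\x_{k+1}-\x_k}$, the max-versus-average comparison over blocks (respectively the uniform-sampling expectation) to gain the factor $1/m$ for GS and random updates, and \eqref{eq:fbound2.1} for the final factor $\tfrac13$. Your observation that \eqref{eq:fbound2.1} remains valid without Assumption~\ref{ass:KL} --- since its proof in Lemma~\ref{l:intermediate} uses only \eqref{eq:gwkwk} and \eqref{eq:fxkfbk} --- is correct and makes explicit a point the paper's proof uses silently.
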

\begin{proof}
For parallel updates, we have
\begin{equation}
	\begin{aligned}
	& f(\x_k) - f(\x_{k + 1})  \geq \Sc{\g}{\x_{k + 1}- \x_k} - \frac{L}{2}\n{\x_{k + 1} - \x_k}^2 \geq \frac{1}{2}\Sc{\g}{\x_{k + 1} - \x_k}  \\
	& = \frac{1}{2}\Sc{\g}{\tilde{\x}_{k + 1} - \x_k} \geq \frac{1}{3}[f(\x_k) - f(\bar{\x}_k)] \, ,		
	\end{aligned}
\end{equation}
where we have used the standard descent Lemma in the first inequality, \eqref{eq:normscg} in the second inequality, and \eqref{eq:fbound2.1} in the last inequality. \\
The proof follows analogously for GS updates, after noticing
\begin{equation}
\Sc{\g}{\x_{k + 1} - \x_k} \geq \frac{1}{m} \Sc{\g}{\tilde{\x}_{k + 1} - \x_k} \, , 
\end{equation}
as showed in \eqref{eq:GSgx}, and for random updates, using
\begin{equation}
\mE[\Sc{\g}{\x_{k + 1} - \x_k}] = \frac{1}{m} \mE[\Sc{\g}{\tilde{\x}_{k + 1} - \x_k}] \, ,
\end{equation}
respectively. \qed
\end{proof}
\begin{lemma}\label{l:crate_main}
 		Let $\{\x_k\}$ be a sequence generated by Algorithm \ref{algo6}, and assume that the angle condition holds for the method $\cA^{(i)}$ with the same $\tau $, for all $i\in [1 \! : \! m]$. Then, if the KL property \eqref{hp:klP} holds at $\x_k$, for parallel updates
 	\begin{equation}
 		f(\x_{k + 1}) - f^* \leq \qP (f(\x_k) - f^*) \, ,
 	\end{equation}
 	for GS updates
 	\begin{equation}
 		f(\x_{k + 1}) - f^* \leq \qGS (f(\x_k) - f^*)  \, ,
 	\end{equation}
 	and for random updates
 	\begin{equation}
 		\mE[f(\x_{k + 1}) - f^*] \leq \qR \mE[f(\x_k) - f^*] \, .
 	\end{equation}
\end{lemma}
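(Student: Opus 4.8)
The plan is to derive, for each selection rule, a one-step decrease of the form $f(\x_k)-f(\x_{k+1})\ge (1-q)\,[f(\x_k)-f^*]$, which rearranges to $f(\x_{k+1})-f^*\le q\,[f(\x_k)-f^*]$ (and analogously in expectation). The two ingredients are already available: Lemma~\ref{l:Epineq} bounds the decrease from below by a multiple of $f(\bar{\x}_k)-f^*$ (this is where the KL inequality \eqref{hp:klP} enters, applied at $\bar{\x}_k$), while Lemma~\ref{l:Epineq2} bounds it from below by a multiple of $f(\x_k)-f(\bar{\x}_k)$. Since these two gaps add up to $f(\x_k)-f^*$, the whole argument reduces to weighting the two estimates so that the intermediate quantity $f(\bar{\x}_k)$ cancels.

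Concretely, write $c_1:=\tfrac{\mu\tau^2}{2L(1+\tau^2)}$, so the parallel bound of Lemma~\ref{l:Epineq} reads $f(\x_k)-f(\x_{k+1})\ge c_1\,[f(\bar{\x}_k)-f^*]$ and the parallel bound \eqref{eq:fboundP} of Lemma~\ref{l:Epineq2} reads $f(\x_k)-f(\x_{k+1})\ge \tfrac13[f(\x_k)-f(\bar{\x}_k)]$. Multiplying the latter by $3c_1$ and adding it to the former, the $f(\bar{\x}_k)$ contributions telescope and I obtain $(1+3c_1)\,[f(\x_k)-f(\x_{k+1})]\ge c_1\,[f(\x_k)-f^*]$, i.e.
\[
 f(\x_k)-f(\x_{k+1})\ \ge\ \frac{c_1}{1+3c_1}\,[f(\x_k)-f^*].
\]
For the GS and random rules both constituent bounds carry an extra factor $1/m$, so the identical weighting (multiply the Lemma~\ref{l:Epineq2} bound by $3c_1$ and add the Lemma~\ref{l:Epineq} bound) yields $f(\x_k)-f(\x_{k+1})\ge \tfrac{c_1}{m(1+3c_1)}[f(\x_k)-f^*]$; in the random case every inequality is read under $\mE[\cdot]$, using \eqref{eq:fbound2} and \eqref{eq:fboundR}.

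It then remains to certify $\tfrac{c_1}{1+3c_1}\ge \tfrac{\mu\tau^2}{4L(1+\tau)^2}$, which converts the sharp-but-unwieldy ratio into the clean rates $\qP$ and $\qGS=\qR$. Clearing denominators and substituting $c_1$, this reduces to $1+4\tau+\tau^2\ge \tfrac{3\mu}{2L}\tau^2$, which holds because $\mu\le L$ (as usual, the Kurdyka-\L ojasiewicz constant cannot exceed the smoothness constant, by combining \eqref{hp:klP} with the descent lemma) and $\tau\in(0,1]$ (the slope lower bound is a normalized inner product). Substituting back gives $f(\x_{k+1})-f^*\le \qP[f(\x_k)-f^*]$ for parallel updates and $\le \qGS[f(\x_k)-f^*]$ for GS, with the random statement holding in expectation with $\qR=\qGS$.

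I expect the only genuinely delicate point to be the cancellation bookkeeping: fixing the weight $3c_1$ so that the $f(\bar{\x}_k)$ terms vanish exactly, and then verifying that the resulting $\tfrac{c_1}{1+3c_1}$ dominates the advertised $1-\qP=\tfrac{\mu\tau^2}{4L(1+\tau)^2}$. A secondary subtlety worth flagging is that Lemma~\ref{l:Epineq} applies \eqref{hp:klP} at $\bar{\x}_k$ rather than at $\x_k$; within the neighbourhood exploited in Theorem~\ref{th:bcrandom} both points satisfy the KL inequality, so this is harmless, but it should be made explicit when the lemma is invoked.
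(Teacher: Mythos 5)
Your proof is correct and follows essentially the same route as the paper: both combine Lemma~\ref{l:Epineq} and Lemma~\ref{l:Epineq2} so that the intermediate term $f(\bar{\x}_k)$ cancels, then relax the resulting constant to $\qP$ (resp.\ $\qGS=\qR$, with the extra $1/m$ and expectations in the GS/random cases) using $\mu\le L$, $\tau\in(0,1]$, and $(1+\tau)^2\ge 1+\tau^2$. The only (cosmetic) difference is that the paper adds the two bounds with equal weights and uses $c_1=\frac{\mu\tau^2}{2L(1+\tau^2)}\le\frac13$ to replace $\frac13$ by $c_1$, yielding the factor $1-\frac{c_1}{2}$, whereas your exact weighting by $3c_1$ gives the marginally sharper $1-\frac{c_1}{1+3c_1}$; both dominate $1-\qP$, and your flag that the KL property is really invoked at $\bar{\x}_k$ (not $\x_k$) correctly matches how Lemma~\ref{l:Epineq} is stated and used in the paper.
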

\begin{proof}
First observe that since $\tau \in [0, 1]$ and $\mu \leq L$  we have
\begin{equation}
	\frac{\tau^2 \mu}{2L(1 + \tau^2)} \,\leq\, \frac {\mu}{3L} \,\leq\,	\frac{1}{3} \, . 
\end{equation}
Then, 
combining~\eqref{parmal} and~\eqref{eq:fboundP}, we can write
\begin{equation}
	\begin{aligned}
		& 2[f(\x_k) - f(\x_{k + 1})] \geq \frac{1}{3}[f(\x_k) - f(\bar{\x}_k)] + \frac{\tau^2 \mu}{2L(1 + \tau^2)}\, [f(\bar{\x}_k) - f^*] \\ 
		& \geq \frac{\tau^2 \mu}{2L(1 + \tau^2)}\, [f(\x_k) - f(\bar{\x}_k)] + \frac{\tau^2 \mu}{2L(1 + \tau^2)}\,[f(\bar{\x}_k) - f^*] \\
		& = \frac{\tau^2 \mu}{2L(1 + \tau^2)}\, [f(\x_k) - f^*)] \, ,	
	\end{aligned}
\end{equation}
 and rearranging
\begin{equation}
	f(\x_{k + 1}) - f^*  \leq \qP [f(\x_k) - f^*] \, .
\end{equation}
The thesis follows for GS and random updates analogously. \qed
\end{proof}
\newcommand{\tdelta}{\tilde{\delta}}
\damiano{Given the previous results for the block coordinate setting, the remaining part of the proof is a straightforward adaptation of arguments used in the proof of \cite[Theorem 4.2]{rinaldi2020avoiding}}.
	\begin{proof}[of Theorem \ref{th:bcrandom}]
We need to prove that the KL property \eqref{hp:klP} holds in $\{\bar{\x}_k\}$. The bounds on $f(\x_k) - f(\x_*)$ then follow immediately by induction from Lemma \ref{l:crate_main}, and in turn the bounds on $\n{\x_k - \x_*}$ follow as in the proof of \cite[Lemma 4.3]{rinaldi2020avoiding}.  \\
For random updates, we can take $\tilde{\delta} < \delta$ small enough so that $f(\x_0) < f(\x_*) + \eta$. Then by construction the KL property \eqref{hp:klP} holds in $U_0$, and since $\{\bar{\x}_k\}$ is contained in $U_0$ by Lemma \ref{l:preliminary}, \eqref{hp:klP} holds in particular in $\{\bar{\x}_k\}$. \\
For parallel updates, thanks to Lemma \ref{l:preliminary} we have that $\{f(\x_k)\}$ is decreasing and $f(\bar{\x}_k), f(\x_k) \geq f(\x_*)$. It can then be proved with an argument analogous to the proof of \cite[Theorem 4.2]{rinaldi2020avoiding} that for $\tilde{\delta}$ small enough, \eqref{hp:klP} holds in $\{\bar{\x}_k\}$. We include the argument here for completeness. Let $f_k = f(\x_k) - f(\x_*)$, and let $\tilde{\delta} < \delta/2$ defined as in the proof of \cite[Theorem 4.2]{rinaldi2020avoiding}  so that
\begin{equation}\label{eq:tdelta}
	\tdelta < \frac{\delta}{2} < \delta - \frac{\sqrt{2f_0(1 - q)}}{L(1 - \sqrt{q})} - \sqrt{\frac{2}{L}}\sqrt{f_0} \, ,
\end{equation}	
with $q=\qP$ here. We now want to prove 
$\bigcup _{[0:k-1]}\{\x_i,\bar{\x}_i\} \cup\{\x_k\}\subset B_{\delta}(\x_*)$ 
for every 
$k \in \mathbb{N}$, by induction on $k$. Notice that $\x_0 \in B_{\delta}(\x_*)$ by construction. To start with the inductive step, 
\begin{equation} \label{eq:firsttail}
	\sum_{i = 0}^{k - 1} \n{\x_i - \x_{i + 1}} \leq \sqrt{\frac{2}{L}} \sum_{i = 0}^{k - 1} \sqrt{f_{i} - f_{i + 1}} \leq \frac{\sqrt{2f_0(1 - q)}}{\sqrt{L}(1 - \sqrt{q})}
\end{equation}
where we used \eqref{eq:decp} in the first inequality, and the second can be derived from \cite[Lemma 8.1]{rinaldi2020avoiding} as in the proof of \cite[Theorem 4.2]{rinaldi2020avoiding}. But then
\begin{equation}\label{eq:inball1}
	\begin{aligned}
		& \n{\x_{k + 1} - \x_*} \leq \n{\x_0 - \x_*} + \left( \sum_{i = 0}^{k - 1} \n{\x_i - \x_{i + 1}} \right) + \n{\x_k - \x_{k + 1}} \\ 
		& \leq \tdelta + \frac{\sqrt{2f_0(1 - q)}}{L(1 - \sqrt{q})} + \sqrt{\frac{2}{L}}\sqrt{f_k - f_{k + 1}} \\
		& <	\tdelta + \frac{\sqrt{2f_0(1 - q)}}{L(1 - \sqrt{q})} + \sqrt{\frac{2}{L}}\sqrt{f_k} < \delta \, ,	
	\end{aligned}
\end{equation}
where we used \eqref{eq:firsttail} together with \eqref{eq:decp} in the second inequality, $f_{k + 1} = f(\x_{k + 1}) - f(\x_*) \geq 0$ in the third inequality, and \eqref{eq:tdelta} together with $f_0 \geq f_{k}$ in the last inequality. \\
We now have
\begin{equation}\label{eq:inball2}
	\begin{aligned}
		& \n{\tilde{\x}_{k} - \x_*} \leq \n{\x_0 - \x_*} + \left( \sum_{i = 0}^{k - 1} \n{\x_i - \x_{i + 1}} \right) + \n{\x_k - \tilde{\x}_{k}} \\ 
		& \leq \n{\x_0 - \x_*} + \left( \sum_{i = 0}^{k - 1} \n{\x_i - \x_{i + 1}} \right) + \n{\x_k - \x_{k + 1}} < \delta \, ,	
	\end{aligned}
\end{equation}
where we used $\n{\tilde{\x}_k - \x_k} \leq \n{\x_{k + 1} - \x_k}$ in the second inequality and the last inequality follows as in \eqref{eq:inball1}. Thus $\tilde{\x}_k \in B_{\delta}(\x_*)$ as well, and the induction is complete. For GS updates the proof that $\{\tilde{\x}_k\} \subset B_{\delta}(\x_*)$ is analogous. \qed
	\end{proof}

\subsection{An active set identification criterion} \label{s:asid}
We prove in this section Theorem \ref{th:actid}, proposing a general active set identification criterion for Algorithm \ref{algo6} in the special case where the feasible set $\OM$ is the product of simplices. With the notation introduced in Section \ref{s:actid}, let $\OM_* = \{\x \in \OM :\supp(\x) = \supp(\x_*) \}$ and $S_* = \{\x \in \rr^n :\supp(\x) = \supp(\x_*)\}$ be the subset of points in $\OM$ and the subspace of directions with the same support of $\x_*$ respectively. 
\begin{definition} \label{def:asrel}
	We say that the method $\bA$ has active set related directions in $\x_*$ if it can do a bounded number of consecutive maximal steps, and if for some neighborhood $V$ of $\x_*$, $\x \rightarrow \x_*$, $\g \rightarrow -\nabla f(\x_*)$ and $\d \in \bA(\x, \g)$:
	\begin{itemize}
		\item if $\x \in \OM_*$ then $\d \in S_*$ with $\alpha_{\max}(\x, \hat{\d}) = \Theta(1) $,
		\item if $\x \in \OM \sm \OM_*$ then $\Sc{\g}{\hat{\d}} = \Theta(1)$.
	\end{itemize}
\end{definition}
\begin{lemma}
	Under the assumptions of Definition \ref{def:asrel}:
	\begin{itemize}
		\item if $\x \in \OM \sm \OM_*$ we have $\alpha_{\max}(\x, \hat{\d}) = o(1)$, 
		\item if $\x \in \OM_*$, then $\Sc{\g}{\hat{\d}} = o(1)$.
	\end{itemize}
\end{lemma}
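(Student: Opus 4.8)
The plan is to read the two assertions as the limiting counterparts of the two cases in Definition~\ref{def:asrel}, and to derive them from the stationarity of $\x_*$ together with the $\Theta(1)$ estimates already assumed. I would first fix notation: let $F=\F(\x_*)$ be the minimal face of $\OM$ containing $\x_*$ and let $T_*$ denote its tangent space at $\x_*$, which for a product of simplices is the subspace of vectors supported on $\supp(\x_*)$ whose coordinates sum to zero on each block. Since $\x_*$ lies in the relative interior of $F$, every $\d\in T_*$ gives two feasible directions $\pm\d$ at $\x_*$; combined with stationarity of $\x_*$ (no feasible descent direction, i.e.\ $\Sc{\nabla f(\x_*)}{\d}\ge 0$ for all feasible $\d$) this yields $\pi(T_*,\nabla f(\x_*))=\oo$. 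I would also record that for $\x$ near enough to $\x_*$ one has $\supp(\x_*)\subseteq\supp(\x)$, so that $\x\in\OM\sm\OM_*$ forces some coordinate $j\notin\supp(\x_*)$ with $x_j>0$ and $x_j\to 0$.

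For the second assertion ($\x\in\OM_*$) I would argue directly. Definition~\ref{def:asrel} gives $\d\in S_*$ and $\alpha_{\max}(\x,\hat{\d})=\Theta(1)$, so $\hat{\d}$ is a feasible direction at $\x$ supported on $\supp(\x_*)$; feasibility forces its coordinates to sum to zero on each block, so $\hat{\d}\in T_*$. Therefore $\Sc{\g}{\hat{\d}}=\Sc{\pi(T_*,\g)}{\hat{\d}}$, and Cauchy--Schwarz gives $|\Sc{\g}{\hat{\d}}|\le\n{\pi(T_*,\g)}$. As $\pi(T_*,\cdot)$ is a fixed continuous linear projection and $\g\to-\nabla f(\x_*)$, the right-hand side tends to $\n{\pi(T_*,-\nabla f(\x_*))}=0$ by stationarity, so $\Sc{\g}{\hat{\d}}=o(1)$.

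For the first assertion ($\x\in\OM\sm\OM_*$) I would argue by contradiction and compactness. If $\alpha_{\max}(\x,\hat{\d})$ did not vanish, there would be a sequence with $\x\to\x_*$, $\g\to-\nabla f(\x_*)$, $\d\in\bA(\x,\g)$ and $\alpha_{\max}(\x,\hat{\d})\ge c>0$. Passing to a subsequence, the unit vectors converge, $\hat{\d}\to\hat{\d}_\infty$ with $\n{\hat{\d}_\infty}=1$. For each such $\x$ the whole segment $\{\x+\alpha\hat{\d}:\alpha\in[0,c]\}$ lies in $\OM$, so letting $\x\to\x_*$ and using that $\OM$ is closed gives $\x_*+\alpha\hat{\d}_\infty\in\OM$ for all $\alpha\in[0,c]$; thus $\hat{\d}_\infty$ is a genuine feasible direction at $\x_*$. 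Meanwhile $\Sc{\g}{\hat{\d}}=\Theta(1)$ is bounded below by a positive constant, so in the limit $\Sc{-\nabla f(\x_*)}{\hat{\d}_\infty}>0$, i.e.\ $\hat{\d}_\infty$ is a strict descent direction for $f$ at $\x_*$. This contradicts the stationarity of $\x_*$, and hence $\alpha_{\max}(\x,\hat{\d})=o(1)$.

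The routine pieces --- the projection identity and Cauchy--Schwarz --- make the second assertion immediate once $\hat{\d}\in T_*$ is established. I expect the main obstacle to be the limiting argument in the first assertion: one must ensure both that the limit direction $\hat{\d}_\infty$ is feasible in the strong sense (a segment of fixed positive length stays in $\OM$, which uses convexity and closedness of $\OM$ together with the \emph{uniform} lower bound $\alpha_{\max}\ge c$) and that the strictly positive slope is preserved under the limit (which uses the \emph{uniform} $\Theta(1)$ lower bound on $\Sc{\g}{\hat{\d}}$, not merely its positivity for each fixed $\x$). Once these two uniformities are in hand, the contradiction with stationarity closes the proof.
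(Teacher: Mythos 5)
Your proposal is correct, and its two halves relate differently to the paper's proof. For the second assertion your projection argument ($\hat{\d}\in T_*$, then $|\Sc{\g}{\hat{\d}}|\le\n{\pi(T_*,\g)}\to\n{\pi(T_*,-\nabla f(\x_*))}=0$) is essentially the paper's argument in geometric dress: the paper simply writes $\Sc{\g}{\hat{\d}}=\Sc{-\nabla f(\x_*)}{\hat{\d}}+o(1)$ and invokes $\Sc{-\nabla f(\x_*)}{\hat{\d}}=0$ for $\d\in S_*$ from stationarity; your version is marginally stronger (it bounds the absolute value) but the content is the same. For the first assertion you take a genuinely different route. The paper argues directly and quantitatively: it uses the identity $\alpha_{\max}(\x,\hat{\d})\,\Sc{\g}{\hat{\d}}=\Sc{\g}{(\x+\alpha_{\max}(\x,\hat{\d})\hat{\d})-\x}$, bounds the right-hand side by $o(1)$ via stationarity of $\x_*$ applied to the feasible endpoint (together with $\g\to-\nabla f(\x_*)$, $\x\to\x_*$ and boundedness of $\OM$), and then divides by the $\Theta(1)$ slope to get $\alpha_{\max}(\x,\hat{\d})\le o(1)/\Theta(1)=o(1)$. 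You instead argue by contradiction and compactness, extracting a subsequence with $\alpha_{\max}\ge c>0$, a limiting unit direction $\hat{\d}_\infty$, a feasible segment $\x_*+[0,c]\hat{\d}_\infty\subset\OM$ by closedness and convexity, and a strict descent slope $\Sc{-\nabla f(\x_*)}{\hat{\d}_\infty}>0$ preserved from the uniform $\Theta(1)$ bound, contradicting stationarity. Both proofs are valid and both use stationarity of $\x_*$, which Definition~\ref{def:asrel} does not state but the section's standing hypotheses supply --- you correctly flag this. The trade-off: the paper's one-line estimate is constructive and implicitly gives a rate ($\alpha_{\max}$ controlled by $\n{\x-\x_*}$ and $\n{\g+\nabla f(\x_*)}$), which fits the quantitative bookkeeping of $o(1)$ versus $\Theta(1)$ terms later in Proposition~\ref{p:cactid}; your argument is more elementary and more general (it needs only closedness and convexity of $\OM$, not the product identity or boundedness), but it is purely qualitative and yields no rate information.
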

\begin{proof}
	Notice that
	\begin{equation}
		\begin{aligned}
			& 0 \leq \alpha_{\max}(\x, \hat{\d})\Sc{\g}{\hat{\d}} = \Sc{\g}{(\x + \alpha_{\max}(\x, \hat{\d})\hat{\d}) - \x} \\
			& = \Sc{-\nabla f(\x)}{(\x + \alpha_{\max}(\x, \hat{\d})\hat{\d}) - \x} + o(1) \leq \Sc{-\nabla f(\x)}{\x_* - \x} + o(1) = o(1) \, .	
		\end{aligned}
	\end{equation}	
	Thus
	\begin{equation}
		\alpha_{\max}(\x, \hat{\d})  \leq \frac{o(1)}{\Sc{\g}{\hat{\d}}} = o(1) \, ,
	\end{equation}
	where in the equality we used $\Sc{\g}{\hat{\d}} = \Theta(1)$ by assumption. This proves the first part of the claim. As for the second part, we have
	\begin{equation}
		\Sc{\g}{\hat{\d}} = \Sc{-\nabla f(\x_*)}{\hat{\d}} + o(1) = o(1) \, ,
	\end{equation} 
	where we used $\Sc{-\nabla f(\x_*)}{\hat{\d}} = 0$ in the second equality, guaranteed by stationarity conditions since $\d \in S_*$. \qed
\end{proof}
\newcommand{\bgl}{\bar{B}}
\begin{proposition} \label{p:cactid}
	Let Algorithm \ref{algo6} be applied to a method with active set related directions in $\x_*$ as in Definition \ref{def:asrel}. Then there is a neighborhood $U$ of $\x_*$ such that if $\x_k \in U$ then $\supp(\x_{k + 1}) = \supp(\x_*)$.
\end{proposition}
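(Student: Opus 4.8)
The plan is to track how the support evolves along the inner short-step chain $\{\y_j\}$ produced by $\SSC$ when it is launched from a point close to $\x_*$, classifying every inner step — by means of Definition~\ref{def:asrel} and the preceding lemma — as either a support-reducing maximal step or a terminating short step. First I would fix the neighbourhood $V$ and the bound $N$ on the number of consecutive maximal steps guaranteed by Definition~\ref{def:asrel}, and then shrink to a neighbourhood $U\subseteq V$ of $\x_*$ so small that two things hold at once: the asymptotic estimates $\Sc{\g}{\hat\d}=\Theta(1)$ versus $\Sc{\g}{\hat\d}=o(1)$ and $\alpha_{\max}(\x,\hat\d)=\Theta(1)$ versus $\alpha_{\max}(\x,\hat\d)=o(1)$ become quantitative strict inequalities comparing $\alpha_{\max}$ with the auxiliary step $\beta_j$; and the entire chain stays inside $V$. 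For the latter I would use that the chain consists of at most $N$ maximal steps, each of length $o(1)$ by the preceding lemma, followed by at most one short step, so the total displacement is $o(1)$ and $U$ can be chosen to absorb it.

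The core of the argument is the step-type dichotomy. If the current inner iterate $\y_j$ lies in $\OM_*$, then the chosen direction satisfies $\d\in S_*$ with $\alpha_{\max}(\y_j,\hat\d)=\Theta(1)$, while the preceding lemma gives $\Sc{\g}{\hat\d}=o(1)$; since the trust-region radius in~\eqref{eq:omegaaux} scales with $\Sc{\g}{\hat\d}/L$, the auxiliary step $\beta_j$ is $o(1)$ and hence strictly smaller than $\alpha_{\max}$. Thus $\alpha_j=\beta_j$, the chain enters Phase~II and terminates; moreover, because $\d\in S_*$ and the step is strictly sub-maximal, no coordinate of $\supp(\x_*)$ is zeroed and no coordinate outside it is activated, so the returned point remains in $\OM_*$. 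Conversely, if $\y_j\in\OM\sm\OM_*$, then $\Sc{\g}{\hat\d}=\Theta(1)$ keeps $\beta_j$ bounded below by a positive constant, whereas $\alpha_{\max}(\y_j,\hat\d)=o(1)$ by the preceding lemma, so $\alpha_j=\alpha_{\max}<\beta_j$ is a maximal step and the chain continues. In this regime the strictly positive slope $\Sc{\g}{\hat\d}=\Theta(1)$ rules out both the stationary exit $\d_j=\oo$ and a Phase~II or FW termination (both of which would require $\alpha_j=\beta_j$), so $\SSC$ cannot stop while in $\OM\sm\OM_*$.

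I would then combine these two facts. As long as the chain is in $\OM\sm\OM_*$ it can only perform maximal steps and can never terminate; but Definition~\ref{def:asrel} caps the number of consecutive maximal steps at $N$, so within at most $N$ inner iterations the chain must enter $\OM_*$. The next step is then a short step that terminates the chain inside $\OM_*$, whence the support of the returned point equals $\supp(\x_*)$. Applying this block by block to every simplex updated at iteration $k$ gives $\supp(\x_{k+1})=\supp(\x_*)$ on those blocks, which is the assertion.

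I expect the main obstacle to be the uniformity underlying the first two paragraphs: the estimates in Definition~\ref{def:asrel} and in the preceding lemma are asymptotic as $\x\to\x_*$ and $\g\to-\nabla f(\x_*)$, yet the dichotomy must be invoked at every inner iterate $\y_j$, not merely at the starting point $\x_k$. The delicate point is therefore to choose $U$ so that the whole (a~priori unknown) trajectory $\{\y_j\}$ stays in the region $V$ where the strict comparison between $\alpha_{\max}$ and $\beta_j$ remains valid; this reduces to bounding the cumulative displacement of the at-most-$N$ maximal steps and the final short step by $o(1)$, which is precisely what the ``bounded number of consecutive maximal steps'' clause of Definition~\ref{def:asrel} is designed to supply.
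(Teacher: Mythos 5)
Your proposal follows essentially the same route as the paper's proof: the identical dichotomy (while the chain stays in $\OM \sm \OM_*$, the slope is $\Theta(1)$ and $\alpha_{\max} = o(1)$, forcing maximal steps and ruling out termination; once it enters $\OM_*$, the direction lies in $S_*$ and $\beta_j = o(1) < \alpha_{\max} = \Theta(1)$, so a terminating sub-maximal step keeps the output in $\OM_*$), with the bounded number of consecutive maximal steps plus the $o(1)$ cumulative displacement supplying exactly the uniformity the paper encodes in inequalities \eqref{eq:pscal2}--\eqref{pscal3}. The only difference is bookkeeping: the paper checks directly that the full maximal-step point stays in $\tx{int}(\Omega_j)$ via the quadratic-versus-linear comparison (needed since the boundary of $\bar{B}$ passes through $\y_0$), whereas you phrase the same fact as a uniform positive lower bound on $\beta_j$, justified by the same displacement bound.
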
	
\begin{proof}
	Let $\{\y_i : i\in [0 \! : \! j]\}$ be the set of points generated by $\tx{SSC}(\x_k, -\nabla f(\x_k))$, $\bar{T}$ the upper bound on the number of consecutive maximal steps, so that $T \leq \bar{T} + 1$, and let $\bgl$ and $B_j$ as in the proof of Lemma~\ref{SSC:property}. We assume without loss of generality that $\n{\d_j} = 1$ for $j \in [0 \! : \!  T]$. \\ 
	We will show that for $\x_k$ sufficiently close to $\x_*$ certain inequalities, namely \eqref{eq:pscal}, \eqref{eq:pscal2} and \eqref{pscal3} are satisfied, allowing us to deduce the identification property. Let 
 $$T^* = \max \{j \in [0 \! : \! T] :\{\y_i : i\in [0 \! : \! j]\}  \subset \OM \sm \OM_* \}$$ whenever $\y_0 \notin \OM_*$, and $T^* = -1$ otherwise.  
	We first claim $\bar{T} \geq T^* + 1$. This is clear by the definition of $T^*$ if $T^* = -1$. Otherwise, for $j \in [0 \! : \! T^*]$ let $\tilde{\y}_{j + 1} = \y_{j} + \alpha_{\max}^{(j)}\d_j$. We now show $\tilde{\y}_{j + 1} \in \OM_{j}$. First, we check $\tilde{\y}_{j + 1} \in \tx{int}(\bgl)$. On one hand we have
	\begin{equation} \label{eq:Lnorm}
		\begin{aligned}
		& L\n{\tilde{\y}_{j + 1} - \y_0}^2 = L\n{\alpha_{\max}^{(j)}\d_j + \sum_{i=0}^{j - 1} \alpha_i \d_i}^2 \leq L \left( \sum_{i=0}^{j - 1}  \alpha_{\max}^{(i)}  \n{\d_i}\right)^2\\
		& = L \left( \sum_{i=0}^{j - 1} \alpha_{\max}^{(i)} \right)^2 
				= O(\max_{i \in [0   :  j]} (\alpha_{\max}^{(i)})^2)	\, ,
		\end{aligned}
	\end{equation}
where we used $\n{\d_i} = 1$ by assumption in the second equality. On the other hand 
\begin{equation} \label{eq:pscal}
	\Sc{\g}{\tilde{\y}_{j + 1} - \y_0} =  \alpha_{\max}^{(j)} \d_j +  \sum_{i=0}^{j - 1} \alpha_i \Sc{\g_i}{\d_i} = O(\max\limits_{i \in [0   :  j]} \alpha_{\max}^{(i)}) \, .
\end{equation}
Since $\y_{(i)} \in \OM \sm \OM_*$, by the active set related property $\alpha_{\max}^{(i)} = o(1)$ for $\x_k \rightarrow \x_*$. Let now $M_1$ and $M_2$ be the implicit constants in \eqref{eq:Lnorm} and \eqref{eq:pscal}. For $\y_0 = \x_k$ close enough to $\x_*$ we obtain
\begin{equation} \label{eq:pscal2}
	L\n{\tilde{\y}_{j + 1} - \y_0}^2 \leq M_1 \max_{i \in [0   :  j]} (\alpha_{\max}^{(i)})^2 < M_2 \max_{i \in [0   :  j]} \alpha_{\max}^{(i)} \leq \Sc{\g}{\tilde{\y}_{j + 1} - \y_0} \, ,
\end{equation}
where we used \eqref{eq:Lnorm} in the first inequality, $\max_{i \in [0   :  j]} \alpha_{\max}^{(i)} = o(1)$ in the second inequality and \eqref{eq:pscal} in the last inequality. From \eqref{eq:pscal2}, $\tilde{\y}_{j + 1} \in \tx{int}\bgl$ follows easily as desired. \\
We now need to check $\tilde{\y}_{j + 1} \in B_j$. Reasoning as above, on the one hand we have $\frac{\n{\g}}{2L} = \Theta(1)$ for $\x_k \rightarrow \x$ (setting aside the trivial case where $-\nabla f(\x_*) = 0$), and on the other hand $\n{\tilde{\y}_{j + 1} - \y_0} = o(1)$ by \eqref{eq:Lnorm}, so that 
\begin{equation} \label{eq:pscal3}
	\n{\tilde{\y}_{j + 1} - \y_0} < \frac{\n{\g}}{2L}
\end{equation}
for $\y_0$ close enough to $\x_*$ and $\tilde{\y}_{j + 1} \in B_j$ as desired. Then $\tilde{\y}_{j + 1} \in \tx{int}(\OM_j)$ for $j \in [0 \! : \! T^*]$, or equivalently $\beta_j > \alpha_{\max}^{(j)}$ the SSC does always maximal steps in the first $T^* + 1$ iterations. In particular, it generates the point $ \y_{T^* + 1} \in \OM_* \setminus\{ \y_{T^*}\}$. The claim is thus proved. \\
If $\y_{T^* + 1}$ is stationary for $g$, the SSC terminates at step 4 with output $\y_{T^* + 1} \in \OM_*$ and the thesis is proved. Otherwise, we claim that the SSC terminates with output $\y_{T^* + 2} \in \OM_*$ and $\beta_{T^* + 1} < \alpha_{\max}^{(T^* + 1)}$. First, observe that by assumption we must have $\d_{T^* + 1} \in S_*$, and therefore $\y_{T^* + 2} = \y_{T^* + 1} + \alpha_{T^* + 1} \d_{T^* + 1} \in \OM_*$. Second, we have $\alpha_{\max}^{(T^* + 1)} = \Theta(1)$, and at the same time
\begin{equation}
	\beta_{T^* + 1} \leq \tx{diam}(\OM_{T^* + 1}) \leq \tx{diam}(B_{T^* + 1}) \leq 2\Sc{\g_{T^* + 1}}{\d_{T^* + 1}} = o(1) \, .
\end{equation}
	Thus for $\y_0$ close enough to $\x_*$ we must have
	\begin{equation} \label{pscal3}
		\beta_{T^* + 1} < \alpha_{\max}^{(T^* + 1)} \, ,
	\end{equation}
and the claim is proved. Since the SSC terminates either with $\y_{T^* + 1}$ or $\y_{T^* + 2}$, and both of these points are in $\OM_*$, the thesis follows. \qed
\end{proof}
\begin{lemma} \label{l:maxO1}
For $\x \rightarrow \x_*$, $\g \rightarrow -\nabla f(\x_*)$, if  $\x \in \OM_*$, $\d \in S_*$ and $\alpha_{\max}(\x, \hat{\d})$ coincides with the maximal feasible stepsize, then $\alpha_{\max}(\x, \hat{\d}) = \Theta(1)$. 
\end{lemma}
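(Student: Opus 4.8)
The plan is to reduce $\alpha_{\max}(\x,\hat{\d})$ to an explicit expression governed only by the non-negativity constraints of the simplices, and then bound it away from both $0$ and $\infty$ as $\x\to\x_*$. First I would note that since $\d\in S_*$, the direction $\hat{\d}$ is supported on $\supp(\x_*)$, and since it is a feasible direction for the product of simplices it is tangent to each block, i.e. the sum of its entries over each block vanishes. Consequently $\x+\alpha\hat{\d}$ preserves every block sum, so feasibility is equivalent to the non-negativity constraints alone, and the maximal feasible stepsize is
\begin{equation*}
\alpha_{\max}(\x,\hat{\d}) = \min_{j:\hat{\d}_j<0}\frac{\x_j}{-\hat{\d}_j}\, .
\end{equation*}
Because $\hat{\d}$ has vanishing block sums and unit norm, at least one coordinate is strictly negative, so this minimum is well defined, finite and positive; moreover every such index $j$ lies in $\supp(\x_*)$, whence $\x_j>0$ for $\x\in\OM_*$.

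For the lower bound I would use that $\n{\hat{\d}}=1$ forces $-\hat{\d}_j\le 1$ for every coordinate, so $\frac{\x_j}{-\hat{\d}_j}\ge \x_j$ at each negative index. Since all negative indices belong to $\supp(\x_*)$, this gives $\alpha_{\max}(\x,\hat{\d})\ge \min_{l\in\supp(\x_*)}\x_l$. As $\x\to\x_*$ with $\x\in\OM_*$, the right-hand side tends to $\min_{l\in\supp(\x_*)}(\x_*)_l$, which is strictly positive by the very definition of the support, so $\alpha_{\max}(\x,\hat{\d})$ is bounded below by a positive constant on a neighborhood of $\x_*$.

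For the upper bound I would exploit tangency to produce a uniformly negative coordinate: the zero block-sum property yields $\sum_{j:\hat{\d}_j<0}(-\hat{\d}_j)=\tfrac12\n{\hat{\d}}_1\ge \tfrac12\n{\hat{\d}}=\tfrac12$, and as there are at most $n$ negative coordinates there is an index $j^*$ with $-\hat{\d}_{j^*}\ge \tfrac{1}{2n}$. Using the simplex bound $\x_{j^*}\le 1$ then gives $\alpha_{\max}(\x,\hat{\d})\le \frac{\x_{j^*}}{-\hat{\d}_{j^*}}\le 2n$. Combining the two estimates yields $\alpha_{\max}(\x,\hat{\d})=\Theta(1)$. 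I expect the only delicate points to be (i) justifying that the simplex equality constraints never bind, which relies on $\hat{\d}$ being a genuine feasible (tangent) direction and not merely an arbitrary element of $S_*$, and (ii) extracting a negative coordinate of magnitude $\Omega(1)$ for the upper bound, where the normalization $\n{\hat{\d}}=1$ together with the zero-sum identity is essential; the hypothesis $\g\to-\nabla f(\x_*)$ plays no role here, since $\alpha_{\max}$ is a purely geometric quantity.
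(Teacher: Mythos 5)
Your proof is correct and, in its substantive part, identical to the paper's: the paper establishes exactly your lower bound via the chain $\alpha_{\max}(\x,\hat{\d}) = \min_{i:\hat{d}_i<0} x_i/|\hat{d}_i| \geq \min_{i:\hat{d}_i<0} x_i \geq \min_{i\in\supp(\x_*)} x_i = \Theta(1)$, using $|\hat{d}_i|\leq\n{\hat{\d}}\leq 1$, $\supp(\d)\subseteq\supp(\x_*)$, and $x_i\rightarrow x_{*,i}>0$. Your extra zero-block-sum counting argument for the upper bound $\alpha_{\max}(\x,\hat{\d})\leq 2n$ only makes explicit what the paper leaves implicit (a feasible step along a unit direction inside the compact set $\OM$ is trivially bounded by $\tx{diam}(\OM)$), and your delicate point (i) on tangency is likewise implicitly assumed by the paper when it writes the closed-form expression for $\alpha_{\max}$.
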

\begin{proof}
	We have
	\begin{equation}
		\alpha_{\max}(\x, \hat{\d}) = \min_{i: \hat{ d }_i < 0} \frac{x_i}{|\hat{ d }_i|} \geq \min_{i: \hat{ d }_i < 0} x_i \geq \min_{i \in \supp(\x_*)} x_i = \Theta(1) \, ,
	\end{equation}
where we used $|\hat{ d }_i| \leq \n{\hat{\d}} \leq 1$ in the first inequality, $\supp(\d) \subseteq \supp(\x_*)$ in the second inequality, and $x_i \rightarrow x_{*, i} > 0$ in the third one. \qed
\end{proof}
For $\x \in \OM$, we define the expression
	$$ \lambda^{(i)} (\x,\g) =  \Sc {\g^{(i)} }{\x^{(i)}}\, \e^{(i)}-\g^{(i)} \, ,\quad i \in [1 \! : \! m]\, ,$$
	and  the
	Lagrangian multiplier vector
	\begin{equation}
		\lambda^{(i)}(\x) = \lambda^{(i)} (\x,-\nabla f(\x)) =\, \nabla f(\x)^{(i)}  - \Sc{\nabla f(\x)^{(i)}}{\x^{(i)}}\, \e^{(i)} \, ,\quad i \in [1 \! : \! m]\, .
	\end{equation}
We notice that strict complementarity holds at a stationary point $\x_* \in \OM$ for $\nabla f(\x_*)$ if and only if it holds for every $i \in [1 \! : \! m]$ at $\x_*^{(i)}\in \OM^{(i)}$ and $\nabla f(\x_*)^{(i)}$.
\begin{lemma} \label{l:AFWs}
Assume that strict complementarity holds at $\x_*$. Then the AFW applied to the simplex has active set related directions in $\x_*$ as in Definition~\ref{def:asrel}.
\end{lemma}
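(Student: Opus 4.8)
The plan is to verify, in a fixed block (the simplex $\Delta^n$, whose superscript I suppress), the two alternatives of Definition~\ref{def:asrel} together with the bounded–maximal–steps requirement, exploiting the explicit form of the FW and away atoms. Write $\g = -\nabla f(\x)$ and $\g_* = -\nabla f(\x_*)$, let $\vv_j$ denote the $j$-th vertex (standard basis vector) of the simplex, and let $\lambda$ be the multiplier at $\x_*$, so that by stationarity $g_{*,j} = -\lambda$ for $j \in \supp(\x_*)$, while strict complementarity produces a gap $\rho > 0$ with $g_{*,j} \leq -\lambda - \rho$ for $j \notin \supp(\x_*)$; in particular $\max_j g_{*,j} = -\lambda$ is attained exactly on $\supp(\x_*)$. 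First I would fix a neighborhood $V$ of $\x_*$ small enough that $\n{\g - \g_*}_\infty < \rho/2$; then the FW atom $\vv_{i^*}$, $i^* \in \argmax_j g_j$, satisfies $i^* \in \supp(\x_*)$. Since on the simplex the barycentric representation is unique, the active set coincides with $\supp(\x)$, so the away atom $\vv_{j^*}$, $j^* \in \argmin_{j \in \supp(\x)} g_j$, is selected over $\supp(\x)$.

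For the first bullet, suppose $\x \in \OM_*$, i.e.\ $\supp(\x) = \supp(\x_*)$. Then $\d^{FW} = \vv_{i^*} - \x$ has $i^* \in \supp(\x_*)$, and $\d^{AS} = \x - \vv_{j^*}$ has $j^* \in \supp(\x) = \supp(\x_*)$, so both candidate directions in \eqref{AFWdir} lie in $S_*$; hence so does the selected $\d^{AFW}$. Moreover, on the simplex the maximal stepsize of each of these directions is the maximal feasible stepsize (reaching a vertex for $\d^{FW}$, zeroing the away weight for $\d^{AS}$), so Lemma~\ref{l:maxO1} yields $\alpha_{\max}(\x,\hat{\d}) = \Theta(1)$, as required.

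For the second bullet, suppose $\x \in \OM \sm \OM_*$. Shrinking $V$ if needed, $\supp(\x) \supseteq \supp(\x_*)$ (the positive coordinates of $\x_*$ stay positive), so the inclusion is strict and there is $j_0 \in \supp(\x) \sm \supp(\x_*)$. Then $g_{j^*} \leq g_{j_0} \leq -\lambda - \rho/2$, while continuity (the off-support weights vanish) gives $\Sc{\g}{\x} \to -\lambda$, whence $\Sc{\g}{\d^{AS}} = \Sc{\g}{\x} - g_{j^*} \geq \rho/2 + o(1) = \Theta(1)$. Since $\d^{AFW}$ maximizes $\Sc{\g}{\cdot}$ over $\{\d^{FW}, \d^{AS}\}$ and every feasible direction has norm at most the diameter $\sqrt 2$ of the simplex, I would conclude $\Sc{\g}{\hat{\d}^{AFW}} \geq \Sc{\g}{\d^{AS}}/\sqrt 2 = \Theta(1)$, the matching upper bound being $\n{\g} = \Theta(1)$ in the nontrivial case $\g_* \neq \oo$.

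It remains to bound the number of consecutive maximal steps. A maximal inner step of the SSC is one with $\alpha_j = \alpha^{(j)}_{\max} < \beta_j$; on the simplex a maximal away step removes one atom from the current support, and a maximal FW step lands on a single vertex. Using that $\Sc{\g}{\y_j}$ is strictly increasing along the SSC (so no vertex is revisited), such a run contains at most $O(n)$ steps; alternatively this is exactly the bounded–maximal–steps property of AFW already established in \cite{rinaldi2020avoiding}. I expect this counting to be the most delicate point, together with checking that the $\Theta(1)$/$o(1)$ estimates above hold uniformly as $\x \to \x_*$, i.e.\ independently of which off-support atom realizes the gap; both are handled by arguing through the single gap $\rho$ and the fixed neighborhood $V$ rather than through a particular atom.
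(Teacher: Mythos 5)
Your proof is correct, but it takes a genuinely more self-contained route than the paper's. The paper dispatches both bullets of Definition~\ref{def:asrel} by citing the AFW direction-characterization results of \cite[Lemma 3.2(a),(b)]{bomze2020active}: outside $\OM_*$ the selected direction is shown to be \emph{exactly} an away step $\x - \e_{\hat{i}}$ with $\hat{i} \notin \supp(\x_*)$, whence $\Sc{\g}{\hat{\d}} = \lambda_{\hat{i}}(\x,\g)/\n{\x - \e_{\hat{i}}} = \Theta(1)$; inside $\OM_*$ the cited lemma guarantees a FW update stays in $S_*$, and $\alpha_{\max} = \Theta(1)$ follows from Lemma~\ref{l:maxO1} just as in your argument. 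You instead avoid resolving the argmax in \eqref{AFWdir} altogether: outside $\OM_*$ you only lower-bound the selected direction's progress by that of the away candidate, $\Sc{\g}{\d^{\tx{AFW}}} \geq \Sc{\g}{\d^{\tx{AS}}} \geq \rho/2 + o(1)$, using the strict-complementarity gap $\rho$ and $\Sc{\g}{\x} \to -\lambda$, together with $\n{\d^{\tx{AFW}}} \leq \sqrt{2}$; inside $\OM_*$ you check directly that \emph{both} candidates lie in $S_*$ (the FW atom by the $\rho/2$-gap forcing $\argmax_j g_j \subseteq \supp(\x_*)$, the away atom trivially since the simplex's active set is $\supp(\x)$). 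This buys independence from the external lemmas and makes the uniformity of the $\Theta(1)$ bounds transparent, at the cost of not identifying which step the AFW actually takes — structural information the paper's route provides for free. You also explicitly verify the bounded-consecutive-maximal-steps clause, which the paper's proof leaves implicit (it is established for AFW in \cite{rinaldi2020avoiding}); one small caution there: monotonicity of $\Sc{\g}{\y_j}$ alone does not prevent the support from growing, so the clean count is rather that each maximal away step strictly shrinks the support while a full FW step lands on a maximizing vertex of the linearized objective, after which the SSC terminates, giving at most $n$ consecutive maximal steps. Your edge cases (vertices cannot occur in $\OM \sm \OM_*$ near $\x_*$; the case $\nabla f(\x_*) = \oo$ forces full support, making the second bullet vacuous) are handled consistently with the paper's implicit treatment.
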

\begin{proof}
	For $\x \rightarrow \x_*$ and $\g \rightarrow -\nabla f(\x_*)$ we have $\lambda (\x, \g) \rightarrow \lambda(\x_*)$, and therefore in particular $\lambda_i(\x, \g) \rightarrow 0$ for $i \in \supp(\x_*)$ while $\lambda_i(\x,\g) \rightarrow \lambda_i(\x_*) > 0$ for $i\in [1 \! : \! n]\sm  \supp (\x_*)$. Therefore, for $\x \in \OM \sm \OM_*$ close enough to $\x_*$ we must have $\max \{\lambda_i(\x, \g) :i \in \supp(\x)\} > \max\{-\lambda_i(\x, \g) :i \in [1 \! : \! n]\}$, so that by \cite[Lemma 3.2(a)]{bomze2020active} we have that the descent direction selected by the AFW satisfies $\d = \x - \e_{\hat{i}}$ for some $\hat{i} \in \argmax \{\lambda_i(\x, \g) :i \in \supp(\x)\} \subset [1 \! : \! n] \sm \supp(\x_*)$. Therefore
	\begin{equation}
		\Sc{\g}{\hat{\d}} = \Sc{\g}{\frac{\x - \e_{\hat{i}}}{\n{\x - \e_{\hat{i}}}}} = \frac{\lambda_{\hat{i}}(\x, \g)}{\n{\x - \e_{\hat{i}}}} = \Theta(1)
	\end{equation}
for $\x \rightarrow \x_*$ and $\g \rightarrow -\nabla f(\x_*)$. \\
As for the case $\x \in \OM_*$, then if $\x, \g$ are close enough to $\x_*$ we must have $\lambda_i(\x, \g) > 0$ for every $i$ in $[1 \! : \! n] \sm \supp(\x_*)$. Therefore by \cite[Lemma 3.2(b)]{bomze2020active} if $y$ is obtained from $\x$ with a FW update we must have $\y_i = 0$ for $i \in [1 \! : \! n] \sm \supp(\x_*)$, which is equivalent to say that the update direction must be in $S_*$. The property $\alpha_{\max}(\x, \hat{\d}) = \Theta(1)$ follows by Lemma \ref{l:maxO1}.  \qed
\end{proof}

\begin{proof}[of Theorem \ref{th:actid}]
Follows by applying the property proved in Lemma \ref{l:AFWs} to each block selected by the method. \qed
\end{proof}

\section{Declarations}

\subsection{Funding and/or Conflicts of interests/Competing interests}
Nothing to declare by all of the authors.

\subsection{Compliance with Ethical Standards}

The entire research work and writing of this article was performed under strict compliance with widely accepted ethical standards by all authors.

\bibliographystyle{spmpsci} 
\bibliography{pdbib}

\begin{thebibliography}{10}
\providecommand{\url}[1]{{#1}}
\providecommand{\urlprefix}{URL }
\expandafter\ifx\csname urlstyle\endcsname\relax
  \providecommand{\doi}[1]{DOI~\discretionary{}{}{}#1}\else
  \providecommand{\doi}{DOI~\discretionary{}{}{}\begingroup
  \urlstyle{rm}\Url}\fi

\bibitem{alaiz2013group}
Ala{\'\i}z, C.M., Barbero, A., Dorronsoro, J.R.: Group fused lasso.
\newblock In: International Conference on Artificial Neural Networks, pp.
  66--73. Springer (2013)

\bibitem{alon2016probabilistic}
Alon, N., Spencer, J.H.: The probabilistic method.
\newblock John Wiley \& Sons (2016)

\bibitem{attouch2010proximal}
Attouch, H., Bolte, J., Redont, P., Soubeyran, A.: Proximal alternating
  minimization and projection methods for nonconvex problems: An approach based
  on the kurdyka-{\l}ojasiewicz inequality.
\newblock Mathematics of Operations Research \textbf{35}(2), 438--457 (2010)

\bibitem{beck2017first}
Beck, A.: First-order methods in optimization.
\newblock SIAM (2017)

\bibitem{bertsekas2015parallel}
Bertsekas, D., Tsitsiklis, J.: Parallel and distributed computation: numerical
  methods.
\newblock Athena Scientific (2015)

\bibitem{birgin2000nonmonotone}
Birgin, E.G., Mart{\'\i}nez, J.M., Raydan, M.: Nonmonotone spectral projected
  gradient methods on convex sets.
\newblock SIAM Journal on Optimization \textbf{10}(4), 1196--1211 (2000)

\bibitem{bolte2007clarke}
Bolte, J., Daniilidis, A., Lewis, A., Shiota, M.: Clarke subgradients of
  stratifiable functions.
\newblock SIAM Journal on Optimization \textbf{18}(2), 556--572 (2007)

\bibitem{bolte2010characterizations}
Bolte, J., Daniilidis, A., Ley, O., Mazet, L.: Characterizations of{ \L
  ojasiewicz} inequalities: subgradient flows, talweg, convexity.
\newblock Transactions of the American Mathematical Society \textbf{362}(6),
  3319--3363 (2010)

\bibitem{bolte2017error}
Bolte, J., Nguyen, T.P., Peypouquet, J., Suter, B.W.: From error bounds to the
  complexity of first-order descent methods for convex functions.
\newblock Mathematical Programming \textbf{165}(2), 471--507 (2017)

\bibitem{bomze1999maximum}
Bomze, I.M., Budinich, M., Pardalos, P.M., Pelillo, M.: The maximum clique
  problem.
\newblock In: Handbook of Combinatorial Optimization, pp. 1--74. Springer
  (1999)

\bibitem{bomze2022two}
Bomze, I.M., Gabl, M., Maggioni, F., Pflug, G.: Two-stage stochastic standard
  quadratic optimization.
\newblock European Journal of Operational Research \textbf{299}(1), 21--34
  (2022)

\bibitem{bomze2019first}
Bomze, I.M., Rinaldi, F., {Rota~Bul\`o}, S.: First-order methods for the
  impatient: Support identification in finite time with convergent
  {F}rank-{W}olfe variants.
\newblock SIAM Journal on Optimization \textbf{29}(3), 2211--2226 (2019)

\bibitem{bomze2020active}
Bomze, I.M., Rinaldi, F., Zeffiro, D.: Active set complexity of the away-step
  {F}rank--{W}olfe algorithm.
\newblock SIAM Journal on Optimization \textbf{30}(3), 2470--2500 (2020)

\bibitem{bomze2021frank}
Bomze, I.M., Rinaldi, F., Zeffiro, D.: {F}rank-{W}olfe and friends: a journey
  into projection-free first-order optimization methods.
\newblock 4OR \textbf{19}(3), 313--345 (2021)

\bibitem{bomze2022fast}
Bomze, I.M., Rinaldi, F., Zeffiro, D.: Fast cluster detection in networks by
  first order optimization.
\newblock SIAM Journal on Mathematics of Data Science \textbf{4}(1), 285--305
  (2022)

\bibitem{bomze2010multi}
Bomze, I.M., Schachinger, W.: Multi-standard quadratic optimization: interior
  point methods and cone programming reformulation.
\newblock Computational Optimization and Applications \textbf{45}(2), 237--256
  (2010)

\bibitem{bomze2017complexity}
Bomze, I.M., Schachinger, W., Ullrich, R.: The complexity of simple models -- a
  study of worst and typical hard cases for the standard quadratic optimization
  problem.
\newblock Mathematics of Operations Research \textbf{43}(2), 347--692 (2017)

\bibitem{boumal2020introduction}
Boumal, N.: An introduction to optimization on smooth manifolds.
\newblock Available online, May \textbf{3} (2020)

\bibitem{calamai1987projected}
Calamai, P.H., Mor{\'e}, J.J.: Projected gradient methods for linearly
  constrained problems.
\newblock Mathematical programming \textbf{39}(1), 93--116 (1987)

\bibitem{combettes2021complexity}
Combettes, C.W., Pokutta, S.: Complexity of linear minimization and projection
  on some sets.
\newblock Operations Research Letters \textbf{49}(4), 565--571 (2021)

\bibitem{di2018two}
Di~Serafino, D., Toraldo, G., Viola, M., Barlow, J.: A two-phase gradient
  method for quadratic programming problems with a single linear constraint and
  bounds on the variables.
\newblock SIAM Journal on Optimization \textbf{28}(4), 2809--2838 (2018)

\bibitem{foygel2012nonparametric}
Foygel, R., Horrell, M., Drton, M., Lafferty, J.: Nonparametric reduced rank
  regression.
\newblock Advances in Neural Information Processing Systems \textbf{25} (2012)

\bibitem{fukunaga2021fast}
Fukunaga, T., Kasai, H.: Fast block-coordinate {F}rank-{W}olfe algorithm for
  semi-relaxed optimal transport.
\newblock arXiv preprint arXiv:2103.05857  (2021)

\bibitem{garber2020revisiting}
Garber, D.: Revisiting {F}rank-{W}olfe for polytopes: Strict complementary and
  sparsity.
\newblock arXiv preprint arXiv:2006.00558  (2020)

\bibitem{jaggi2013revisiting}
Jaggi, M.: Revisiting {F}rank-{W}olfe: Projection-free sparse convex
  optimization.
\newblock In: ICML (1), pp. 427--435 (2013)

\bibitem{jegelka2013reflection}
Jegelka, S., Bach, F., Sra, S.: Reflection methods for user-friendly submodular
  optimization.
\newblock Advances in Neural Information Processing Systems \textbf{26} (2013)

\bibitem{karimi2016linear}
Karimi, H., Nutini, J., Schmidt, M.: Linear convergence of gradient and
  proximal-gradient methods under the {Polyak-\L ojasiewicz} condition.
\newblock In: Joint European Conference on Machine Learning and Knowledge
  Discovery in Databases, pp. 795--811. Springer (2016)

\bibitem{lacoste2012block}
Lacoste-Julien, S., Jaggi, M., Schmidt, M., Pletscher, P.: Block-coordinate {
  {F}rank-{W}olfe} optimization for structural {SVMs}.
\newblock In: S.~Dasgupta, D.~McAllester (eds.) Proceedings of the 30th
  International Conference on Machine Learning, \emph{Proceedings of Machine
  Learning Research}, vol. 28(1), pp. 53--61. PMLR, Atlanta, Georgia, USA
  (2013).
\newblock
  \urlprefix\url{http://proceedings.mlr.press/v28/lacoste-julien13.html}

\bibitem{lan2020first}
Lan, G.: First-order and Stochastic Optimization Methods for Machine Learning.
\newblock Springer (2020)

\bibitem{leary2000global}
Leary, R.H.: Global optimization on funneling landscapes.
\newblock Journal of Global Optimization \textbf{18}(4), 367--383 (2000)

\bibitem{leblanc1975efficient}
LeBlanc, L.J., Morlok, E.K., Pierskalla, W.P.: An efficient approach to solving
  the road network equilibrium traffic assignment problem.
\newblock Transportation research \textbf{9}(5), 309--318 (1975)

\bibitem{liu2012tensor}
Liu, J., Musialski, P., Wonka, P., Ye, J.: Tensor completion for estimating
  missing values in visual data.
\newblock IEEE transactions on pattern analysis and machine intelligence
  \textbf{35}(1), 208--220 (2012)

\bibitem{locatelli2013global}
Locatelli, M., Schoen, F.: Global optimization: theory, algorithms, and
  applications.
\newblock SIAM (2013)

\bibitem{luo1992convergence}
Luo, Z.Q., Tseng, P.: On the convergence of the coordinate descent method for
  convex differentiable minimization.
\newblock Journal of Optimization Theory and Applications \textbf{72}(1), 7--35
  (1992)

\bibitem{nesterov2012efficiency}
Nesterov, Y.: Efficiency of coordinate descent methods on huge-scale
  optimization problems.
\newblock SIAM Journal on Optimization \textbf{22}(2), 341--362 (2012)

\bibitem{osokin2016minding}
Osokin, A., Alayrac, J.B., Lukasewitz, I., Dokania, P., Lacoste-Julien, S.:
  Minding the gaps for block {F}rank-{W}olfe optimization of structured svms.
\newblock In: International Conference on Machine Learning, pp. 593--602. PMLR
  (2016)

\bibitem{richtarik2014iteration}
Richt{\'a}rik, P., Tak{\'a}{\v{c}}, M.: Iteration complexity of randomized
  block-coordinate descent methods for minimizing a composite function.
\newblock Mathematical Programming \textbf{144}(1), 1--38 (2014)

\bibitem{rinaldi2020unifying}
Rinaldi, F., Zeffiro, D.: A unifying framework for the analysis of
  projection-free first-order methods under a sufficient slope condition.
\newblock arXiv preprint arXiv:2008.09781  (2020)

\bibitem{rinaldi2020avoiding}
Rinaldi, F., Zeffiro, D.: Avoiding bad steps in {F}rank {W}olfe variants.
\newblock Computational Optimization and Applications \textbf{84}, 225--264
  (2023)

\bibitem{di2023stationarity}
di~Serafino, D., Hager, W.W., Toraldo, G., Viola, M.: On the stationarity for
  nonlinear optimization problems with polyhedral constraints.
\newblock Mathematical Programming pp. 1--28 (2023)

\bibitem{wang2016parallel}
Wang, Y.X., Sadhanala, V., Dai, W., Neiswanger, W., Sra, S., Xing, E.: Parallel
  and distributed block-coordinate {F}rank-{W}olfe algorithms.
\newblock In: International Conference on Machine Learning, pp. 1548--1557.
  PMLR (2016)

\end{thebibliography}
	
\end{document}